\newtheorem{theorem}{Theorem}[section]
\newtheorem{lemma}{Lemma}[section]
\newtheorem{proposition}{Proposition}[section]
\newcommand{\R}{\mathbb R}
\newcommand{\eps}{\varepsilon}
\newcommand{\vp}{\varphi}
\newcommand{\sech}{\mbox{\textup{sech}}}
\newcommand{\dd}{\, \mathrm{d}}
\newcommand{\dst}{\mbox{dist}}
\numberwithin{equation}{section}
\title[Asymptotic stability in the $\phi^4$ model]{Asymptotic stability for odd perturbations of the stationary kink in the variable-speed $\phi^4$ model}
\author{Stanley Snelson}
\address{Department of Mathematics, University of Chicago, 5734 S. University Ave., Chicago, IL 60637}
\email{snelson@math.uchicago.edu}
\subjclass[2010]{35L71}
\thanks{The author was partially supported by NSF grant DMS-1246999. The author would also like to thank Wilhelm Schlag for pointing out this problem and for helpful discussions.}
\begin{document}

\begin{abstract}We consider the $\phi^4$ model in one space dimension with propagation speeds that are small deviations from a constant function. In the constant-speed case, a stationary solution called the kink is known explicitly, and the recent work of Kowalczyk, Martel, and Mu\~noz established the asymptotic stability of the kink with respect to odd perturbations in the natural energy space. We show that a stationary kink solution exists also for our class of non-constant propagation speeds, and extend the asymptotic stability result by taking a perturbative approach to the method of Kowalczyk, Martel, and Mu\~noz. This requires an understanding of the spectrum of the linearization around the variable-speed kink.\end{abstract}
\maketitle
\section{Introduction}

The $\phi^4$ model is a classical nonlinear equation that arises in quantum field theory, statistical mechanics, and other areas of physics. See, for instance, \cite{MS, V, VS, Ra, W, PS} for the physical background. We are interested in the case where the propagation speed $c$ is allowed to vary with position. The equation is given in one space dimension by
\begin{equation}\label{e:main}
\partial_t^2\phi - c^2(x)\partial_x^2\phi = \phi - \phi^3, \quad (t,x)\in \R\times \R,
\end{equation}
where $c(x)$ is a uniformly positive function. We will restrict our attention to even functions $c$ that are small deviations from the constant unit speed $c\equiv 1$. (See below for the precise assumption.) Note that the energy
\[E(\phi,\partial_t\phi) := \int \frac 1 {c^2}\left(\frac 1 2 (\partial_t \phi)^2 + \frac 1 2 c^2 (\partial_x\phi)^2 + \frac 1 4 (1-\phi^2)^2\right)\dd x\]
is formally conserved if $\phi$ solves \eqref{e:main},

In the case $c(x)\equiv 1$, a stationary solution to \eqref{e:main} is known explicitly:
\[H(x) := \tanh\left(\frac x {\sqrt 2}\right),\]
known as the \emph{kink}, connects the two minima of the potential $\frac 1 4 (1-\phi^2)^2$ and is the unique bounded, odd solution of $-H''=H-H^3$, up to multiplication by $-1$. The kink in the $\phi^4$ model is seen as a prototype for solitons that occur in more complicated field theories, see \cite{Ra}. Since the energy $E(H,0)$ of the kink is finite, perturbations of the form $(\phi,\partial_t\phi) = (H+\vp_1,\vp_2)$ with $(\vp_1,\vp_2)\in H^1\times L^2$ are referred to as perturbations in the energy space. Standard arguments show that \eqref{e:main} is locally well-posed for initial data of the form $(H+\vp_1^{in},\vp_2^{in})$ with $\vp^{in}=(\vp_1^{in},\vp_2^{in})\in H^1\times L^2$.
Regarding the long-time behavior, in the constant-speed case, the kink is orbitally stable with respect to small perturbations in the energy space, by a result of Henry, Perez, and Wreszinski \cite{HPW}. In other words, solutions starting close to the kink remain close for all time, up to Lorentzian invariance. In a recent paper \cite{KMM}, Kowalczyk, Martel and Mu\~noz showed that in the case of odd perturbations (which corresponds to fixing the position of the traveling wave), this can be improved to asymptotic stability. Their approach, described below, is elementary and avoids the use of dispersive estimates. It was partially based on the work of Martel and Merle on the generalized KdV equations \cite{MM1, MM2} and Merle and Raphael \cite{MR} on the mass-critical nonlinear Schr\"odinger equation, but was adapted to additional difficulties resulting from the exchange of energy between internal oscillations and radiation, and the different decay rates of the corresponding components of the solution. These difficulties were seen earlier in the context of general Klein-Gordon equations with potential by Soffer and Weinstein \cite{SW}, who conjectured that a similar mechanism was at work in the $\phi^4$ model. The assumption of odd perturbations has appeared in other work concerning the asymptotic stability of solitons (see, for example, \cite{KK, KK2}), and in the $\phi^4$ model, odd perturbations already give rise to the challenging issues related to energy exchange. However, the authors of \cite{KMM} conjecture that the kink is in fact asymptotically stable with respect to general perturbations in the energy space. 

In this paper, we extend the results of \cite{KMM} to \eqref{e:main} with a certain class of non-constant propagation speeds $c(x)$. Before we state our results, it is convenient to exchange the second-order coefficient in \eqref{e:main} for a small first-order term by making the change of variables $ y = \int_0^x [1/c(s)]\dd s$. Defining $\Phi(t,y) = \phi(t,x(y))$, we obtain the equation
% and $K(y) = \bar K(x(y))$
\begin{align}
\partial_t^2\Phi - \partial_y^2\Phi + b(y) \partial_y \Phi &= \Phi - \Phi^3,\label{e:psi}
%-\partial_y^2 K  + b(y) \partial_y K &= K - K^3,\label{e:K}
\end{align}
with $b(y) = \left(1/ c(x(y))\right)\frac d {dy} c(x(y))$. We will deal with drift coefficients $b$ that are odd and satisfy
\begin{equation}\label{e:decay}
|b(y)|  \lesssim \delta e^{-\sqrt 2 |y|},\qquad  |b'(y)| \lesssim \delta,
\end{equation}
for some small constant $\delta>0$. In terms of $x$, it is sufficient to assume in \eqref{e:main} that $c(x) = 1 + c_\delta(x)$, with $c_\delta$ even, twice differentiable, and
\begin{equation*} 
|c_\delta(x)| + |c_\delta'(x)| \leq \delta e^{-c_1|x|}, \qquad  |c_\delta''(x)| \leq \delta,
\end{equation*}
with $c_1 =\sqrt 2/(1-\delta)$. We will work in the $y$ variable for the entire paper. Note that oddness in $y$ is equivalent to oddness in $x$, and that solutions to \eqref{e:main} and \eqref{e:psi} are odd if the initial data are odd.

Our first goal is the existence of a stationary solution in the variable-speed case, which is close to the constant-speed kink $H$ in the appropriate sense:
\begin{theorem}\label{t:exist}
Assume that $b$ satisfies \eqref{e:decay}. Then there exists an odd, bounded, time-independent solution $K$ of \eqref{e:psi}. Furthermore, for $H(y)=\tanh(y/\sqrt 2)$, the difference $H_\delta := K-H$ satisfies $|H_\delta(y)| + |H_\delta'(y)| \lesssim \delta e^{-\sqrt 2 |y|}$.
\end{theorem}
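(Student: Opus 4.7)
Writing $K = H + \He$ and using $-H'' = H - H^3$, the stationary equation $-K'' + bK' = K - K^3$ for \eqref{e:psi} reduces to
\[
L\He \;=\; -bH' - b\He' - 3H\He^2 - \He^3,
\]
where $L := -\partial_y^2 + (3H^2-1)$ is the standard Pöschl–Teller linearization of the $\phi^4$ model around the constant-speed kink. The plan is to solve this for $\He$ by a Banach fixed-point argument in the exponentially weighted space
\[
X := \bigl\{u \in C^1(\R) : u \text{ odd},\ \|u\|_X := \sup_{y\in\R} e^{\sqrt 2|y|}(|u(y)|+|u'(y)|) < \infty\bigr\}.
\]

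\textbf{Inverting $L$ on odd functions.} The operator $L$ has well-known $L^2(\R)$-spectrum $\{0,\,3/2\}\cup[2,\infty)$, with ground state proportional to $H' \propto \sech^2(y/\sqrt 2)$ (even) and second eigenfunction $\sech(y/\sqrt 2)\tanh(y/\sqrt 2)$ (odd). Thus $L \geq 3/2$ on odd functions and is boundedly invertible there. To upgrade this to a weighted estimate, I would construct the Green's function for the Dirichlet problem on $(0,\infty)$, which is equivalent to restricting to odd functions on $\R$: using $H'$ as the solution decaying at $+\infty$ and a second linearly independent homogeneous solution $\psi$ produced by reduction of order from $H'$, satisfying $\psi(y)\sim e^{\sqrt 2|y|}$ at infinity (since $3H^2-1\to 2$ exponentially fast), one obtains an explicit Green's function with $|G(y,s)|\lesssim e^{-\sqrt 2|y-s|}$ for $y,s>0$. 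This yields
\[
\|L^{-1}f\|_X \;\lesssim\; \sup_{y\in\R} e^{\sqrt 2|y|}\,|f(y)|
\]
for odd $f$ with the right-hand side finite; the derivative bound built into $\|\cdot\|_X$ follows by bootstrapping through $u'' = (3H^2-1)u - f$.

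\textbf{Fixed point.} Define $Tu := L^{-1}\bigl(-bH' - bu' - 3Hu^2 - u^3\bigr)$. The decay hypothesis \eqref{e:decay} and $|H'(y)|\lesssim e^{-\sqrt 2|y|}$ give $|bH'|\lesssim \delta e^{-2\sqrt 2|y|}$, so the linear forcing contributes at most $C\delta$ to $\|Tu\|_X$. Bounding the remaining terms in the weighted source norm by $\delta\|u\|_X$, $\|u\|_X^2$, and $\|u\|_X^3$ respectively, one checks that for a suitable $C_0$ and $\delta$ sufficiently small, $T$ maps the closed ball $\{\|u\|_X\leq C_0\delta\}\subset X$ into itself, and an analogous difference estimate shows $T$ is a contraction there with constant $O(\delta)$. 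The Banach fixed-point theorem then produces a unique $\He\in X$, and $K := H + \He$ is the required odd, bounded stationary solution; the pointwise bound asserted in the theorem is exactly encoded by $\He \in X$ with $\|\He\|_X \lesssim \delta$.

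\textbf{Main obstacle.} The only non-routine ingredient is the weighted estimate on $L^{-1}$ restricted to odd functions; the rest is perturbative bookkeeping. Constructing the growing homogeneous solution $\psi$ and pinning down its $e^{\sqrt 2|y|}$ asymptotics via classical ODE methods is where care is needed, as the exponent $\sqrt 2$ appearing in the statement of the theorem is dictated precisely by this asymptotic rate and must be matched against the exponential decay hypothesis on $b$.
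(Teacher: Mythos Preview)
Your overall strategy matches the paper's: write $K = H + H_\delta$, reduce to an equation for $H_\delta$ with $\mathcal L = -\partial_y^2 + (3H^2-1)$ as principal part, build the Green's function on $[0,\infty)$ from the decaying solution $Y_0 \propto H'$ and a second solution $Z_0$ obtained by reduction of order, and close by a contraction in an exponentially weighted ball of radius $O(\delta)$. The only structural difference is that the paper absorbs $b\partial_y$ into the linear operator (constructing a Green's function for $\mathcal L_b = \mathcal L + b\partial_y$ via a Fredholm iteration) while you keep $-b\He'$ on the right as part of the forcing; since $|b\He'| \lesssim \delta\|H_\delta\|_X\, e^{-2\sqrt 2|y|}$, both choices work equally well.

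There is, however, a genuine slip in the step you flag as the ``only non-routine ingredient.'' The weighted estimate
\[
\|L^{-1}f\|_X \;\lesssim\; \sup_{y} e^{\sqrt 2|y|}|f(y)|
\]
is \emph{false} at the borderline exponent $\sqrt 2$: since $|Z_0(w)| \sim e^{\sqrt 2 w}$, a source with $|f(w)| \sim e^{-\sqrt 2 w}$ gives
\[
Y_0(y)\int_0^y |Z_0(w)f(w)|\,\mathrm{d}w \;\sim\; e^{-\sqrt 2 y}\int_0^y 1\,\mathrm{d}w \;=\; y\,e^{-\sqrt 2 y},
\]
so $L^{-1}f$ only decays like $(1+|y|)e^{-\sqrt 2|y|}$ and falls outside $X$. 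The fix is easy but must be stated: every forcing term in your map $T$ --- namely $bH'$, $bu'$, $3Hu^2$, $u^3$ --- actually decays at least like $e^{-2\sqrt 2|y|}$ on the ball $\{\|u\|_X \le C_0\delta\}$, and the correct mapping property is
\[
|f(w)| \le M e^{-2\sqrt 2|w|} \quad\Longrightarrow\quad |L^{-1}f(y)| + |(L^{-1}f)'(y)| \lesssim M e^{-\sqrt 2|y|}.
\]
This is exactly how the paper proceeds (see the estimate of $h$ and of the nonlinear integral there). Once you replace the source norm by $\sup_y e^{2\sqrt 2|y|}|f(y)|$ (or simply track the extra decay directly), your contraction argument goes through unchanged.
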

See Section \ref{s:stat} for the proof. We refer to $K(y)$ as the stationary kink, by analogy with the constant-speed case.

To study the long-time asymptotics of odd perturbations of $K(y)$ in the energy space, let $\vp(t) = (\varphi_1(t), \varphi_2(t))\in H^1\times L^2$ be odd in $y$, and set $\Phi = K + \varphi_1, \partial_t\Phi = \varphi_2$ in \eqref{e:psi}. Then the perturbation $\vp$ satisfies
\begin{equation}\label{e:perturb}
\begin{cases}\partial_t\vp_1 &= \vp_2\\
\partial_t\vp_2 &= -\mathcal L_K \vp_1 - (3K\vp_1^2+\vp_1^3),\end{cases}
\end{equation}
where $\mathcal L_K$ is the linearized operator around $K$:
\begin{equation}\label{e:LK}
\mathcal L_K = -\partial_y^2 - b(y)\partial_y - 1 + 3K^2 = \mathcal L - b(y)\partial_y + d(y).
\end{equation}
Here $\mathcal L = -\partial_y^2 - 1 + 3H^2$ is the linearization around $H(y) = \tanh(y/\sqrt 2)$, and $d(y) = 3(K(y)^2 - H(y)^2)$. With the inner products
\[\langle f,g\rangle := \int_\R f(y)g(y)\dd y, \qquad \langle f,g\rangle_p := \int_\R p(y)f(y)g(y)\dd y,\]
where $p(y) = \exp(\int_0^y b(s)\dd s)$, note that $\mathcal L$ is self-adjoint with respect to $\langle \cdot,\cdot\rangle$, and $\mathcal L_K$ is self-adjoint with respect to $\langle \cdot, \cdot\rangle_p$.

We now state our main theorem, which says that $K(y)$ is asymptotically stable with respect to odd perturbations in the energy space:
%3H_\delta^2(y)+6H(y)H_\delta(y)$.
%If we let
%\[\phi = \bar K + \bar \vp_1, \quad \partial_t \phi = \bar \vp_2,\]
%in \eqref{e:main}, then $(\bar \vp_1,\bar \vp_2)$ satisfies
%\begin{equation}\label{e:perturb1}
%\begin{cases}\partial_t\bar \vp_1 &= \bar \vp_2\\
%\partial_t\bar \vp_2 &= c^2(x)\partial_x^2\bar \vp_1 + (3\bar K^2 - 1)\bar \vp_1 -(3\bar K\bar \vp_1^2+\bar \vp_1^3),\end{cases}
%\end{equation}
%In this regard, the main theorem of this paper is the following:
\begin{theorem}\label{t:main}
There exist $\delta>0,\eps_0>0$ such that for any $\eps\in (0,\eps_0)$ and for any odd $\vp^{in}\in H^1\times L^2$ with 
\[\|\vp^{in}\|_{H^1\times L^2} < \eps,\]
the solution $\vp$ of \eqref{e:perturb} with $b$ satisfying \eqref{e:decay} and with initial data $\vp(0) = \vp^{in}$ exists globally in $H^1\times L^2$ and satisfies 
\[\lim_{t\rightarrow\pm \infty} \| \vp(t)\|_{H^1(I)\times L^2(I)}=0,\]
for any bounded interval $I\subset\R$.
\end{theorem}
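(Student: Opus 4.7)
The plan is to transplant the method of Kowalczyk--Martel--Mu\~noz \cite{KMM} to \eqref{e:perturb}, treating $-b(y)\partial_y + d(y)$ in \eqref{e:LK} as a perturbation of $\mathcal L$ and working throughout in the weighted inner product $\langle \cdot,\cdot\rangle_p$, with respect to which $\mathcal L_K$ is self-adjoint. As a preliminary spectral step, standard perturbation theory for isolated eigenvalues, combined with the closeness of $K$ to $H$ from Theorem \ref{t:exist}, yields a simple odd eigenvalue $\lambda_K^2$ of $\mathcal L_K$ near the shape-mode eigenvalue $3/4$ of $\mathcal L$, with eigenfunction $Y_K$ close to the shape mode $Y$; on the odd sector the zero eigenvalue (generated by translation, even) and negative eigenvalue of $\mathcal L_K$ are absent, so $\mathcal L_K$ is coercive on $\{u \text{ odd} : \langle u, Y_K\rangle_p = 0\}$. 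Global existence of $\vp$ in $H^1\times L^2$ then follows from conservation of the (weighted) energy together with the orbital stability coming from this coercivity, so the real task is the quantitative decay.

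First I would decompose $\vp_1(t,y) = a(t) Y_K(y) + u(t,y)$ with $\langle u(t), Y_K\rangle_p = 0$, producing from \eqref{e:perturb} a scalar ODE for $a$ coupled to a wave-type equation for $u$. Next I would carry out the virial argument of \cite{KMM} on $u$: two nested weighted functionals of virial type, with weights built from $\sech^2(y/A)$ at well-chosen scales $A \ll B$, have time derivatives bounded below by positive quadratic forms in $u_y$ and in a localized $u$. The new error terms coming from $b(y)\partial_y u$ and $d(y) u$ are localized near $y=0$ by \eqref{e:decay} and Theorem \ref{t:exist}, hence are absorbed at the cost of a factor of $\delta$. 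The threshold-resonance obstruction, which in \cite{KMM} is removed by a Darboux-type factorization of $\mathcal L$, carries over here because the factor can be found as a small perturbation of the unperturbed one; the extra first-order term from $b$ is harmless because $b$ decays exponentially.

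The heart of the argument is the Fermi-golden-rule estimate on $a$. The equation for $a$ takes the schematic form $\ddot a + \lambda_K^2 a = c_1 a^2 + c_2 a u + O((a,u)^3)$, and the forcing at frequency $2\lambda_K$ (which lies in the continuous spectrum, since $2\lambda_K > 1$) transfers the energy of the internal mode into radiation. The nondegeneracy condition reduces to the non-vanishing of a specific overlap integral between $3 K Y_K^2$ and the generalized eigenfunction of $\mathcal L_K$ at energy $4\lambda_K^2$; this integral is nonzero at $\delta = 0$ by \cite{KMM}, and continuity of generalized eigenfunctions at an embedded energy in the coefficient $b$ shows it remains nonzero for $\delta$ small. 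Combined with the virial bounds, a bootstrap yields
\[\int_0^\infty \bigl(|a(t)|^2 + |\dot a(t)|^2 + \|u(t)\|_{H^1(I)}^2\bigr)\dd t < \infty\]
for any bounded interval $I$, from which the local decay $\|\vp(t)\|_{H^1(I)\times L^2(I)}\to 0$ follows by a standard argument using continuity in $t$ of $\|u(t)\|_{H^1(I)}$. I expect the main obstacle to be this Fermi-golden-rule step, since it requires a quantitative continuous-dependence statement for generalized eigenfunctions at an embedded energy in the drift $b$, something not covered by the routine resolvent perturbation theory that handles the isolated eigenvalue $\lambda_K^2$ and the coercivity input.
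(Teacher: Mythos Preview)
Your overall strategy---transplant \cite{KMM} by treating $-b\partial_y + d$ as a perturbation, work in $\langle\cdot,\cdot\rangle_p$, perturb the spectral decomposition and the Fermi-golden-rule input---is exactly the paper's, and your identification of the Fermi-golden-rule step as the delicate point is correct. But two things in the proposal would not work as written.

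First, the integrability you claim for the internal mode is too strong. The expected decay is $|a(t)|\sim t^{-1/2}$, so $\int_0^\infty |a(t)|^2\,dt$ diverges; only $\int_0^\infty |a(t)|^4\,dt<\infty$ is available, and this mismatch of rates between the internal mode and the radiation is the whole point of the KMM construction. The paper (following \cite{KMM}) handles this by passing from $(z_1,z_2)$ to the quadratic variables $\alpha=z_1^2-z_2^2$, $\beta=2z_1z_2$, and from $u$ to $v_1=u_1+|z|^2 q$ with $\mathcal L_K q=\bar f$, so that the system for $(v,\alpha,\beta)$ carries a source $\alpha\bar f$ that is \emph{quadratic} in $z$. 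The virial functional $\mathcal I$ is then paired not with a second virial at a different scale but with $\mathcal J=\alpha\int v_2\bar g - 2\mu\beta\int v_1\bar g$, and coercivity of the resulting bilinear form $\tilde{\mathcal D}(v_1,\alpha)$ yields the correct estimate $\int_\R(|z|^4+\|v\|_{H^1_\omega\times L^2_\omega}^2)\,dt\lesssim\eps^2$. A scheme aimed at $\int|a|^2<\infty$ will not close.

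Second, the tools you name---nested $\sech^2(y/A)$ weights at scales $A\ll B$, a Darboux-type factorization to remove a threshold resonance---belong to later variants of the method, not to \cite{KMM} or to this paper. Here the weight is the single explicit $\psi=8\sqrt 2\tanh(y/8\sqrt 2)$, and coercivity is imported from \cite{KMM} as a black box (Lemma~\ref{l:BD}) and then perturbed in $\delta$. For the Fermi-golden-rule input the paper does not need continuity of generalized eigenfunctions in $b$: it solves $(\mathcal L_K-4\mu^2)\bar h=\ell$ directly by building a Green's function from an explicit Jost-type solution $k^\circ$ at energy $4\mu^2$ (a small deformation of the function $k$ in \cite{KMM}), and shows $\|\bar h-g\|_{L^\infty}\lesssim\delta$, so that $\int \bar f\,\bar g$ stays positive and the coercivity of $\tilde{\mathcal D}$ follows from that of $\mathcal D$. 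Your continuity-of-generalized-eigenfunctions route would ultimately amount to the same thing, but the direct ODE construction is what makes the $\delta$-dependence clean. (Minor: the shape-mode eigenvalue of $\mathcal L$ is $3/2$, not $3/4$.)
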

The conclusion of Theorem \ref{t:main} cannot be improved to $\lim_{t\to\infty} \|\vp(t)\|_{H^1(\R)\times L^2(\R)} = 0$ because, by orbital stability and energy conservation of $\vp$ (see Section \ref{s:orbit}), this would imply $\vp(t) \equiv 0 $ for all $t\in \R$.

We now briefly describe the proof in \cite{KMM} of asymptotic stability in the constant-speed case. A key idea in that proof was to decompose the solution $\vp(t)$ based on the spectrum of the linearized operator $\mathcal L$. It is known (see, for example, \cite{NU}) that the spectrum
\[\sigma(\mathcal L) = \left\{0,\frac 3 2\right\} \cup [2,\infty),\]
with simple eigenvalues $0$ and $\frac 3 2$ corresponding to the $L^2$-normalized eigenfunctions
\[Y_0(x) := \frac 1 2 \sech^2\left(\frac x{\sqrt 2}\right)\]
and
\[Y_1(x) := 2^{-3/4}3^{1/2}\tanh\left(\frac x{\sqrt 2}\right)\sech\left(\frac x{\sqrt 2}\right).\]
Since $Y_0$ is even, it does not influence the dynamics of odd perturbations. But the odd eigenfunction $Y_1$, known as the internal mode of oscillation, plays a crucial role in the analysis.
The solution $\vp$ in the case $b\equiv d\equiv 0$ is written $\vp_1 = z_1(t)Y_1 + u_1$, $\vp_2 = (\frac 3 2)^{1/2}z_2(t)Y_1 + u_2$, with $\langle u_1,Y_1\rangle = \langle u_2,Y_1\rangle =0$, and asymptotic stability is proven as a consequence of an estimate of the form
\begin{equation}\label{e:z4}
\int_\R (|z_1(t)|^4 + |z_2(t)|^4)\dd t + \int_\R\int_\R ((\partial_x u_1)^2 + u_1^2 + u_2^2)e^{-c_0 |x|} \dd x \dd t \lesssim \|\vp^{in}\|_{H^1\times L^2}^2,\end{equation}
for some $c_0>0$. This estimate suggests that the internal oscillation mode $z(t) = (z_1,z_2)$ decays at a slower rate as $t\to\infty$ than $u =(u_1,u_2)$, which corresponds to radiation. After defining $v_1,v_2,\alpha,$ and $\beta$ in terms of $u$ and $z$ (in a formally similar way to the analogous quantities defined in Section \ref{s:orbit} below), the authors of \cite{KMM} proved \eqref{e:z4} using Virial functionals of the form $\mathcal I = \int \psi(\partial_x v_1)v_2 + \frac 1 2\int \psi'v_1v_2$ and $\mathcal J = \alpha \int v_2 g - 2\sqrt{3/2}\, \beta \int v_1 g$, with the functions $\psi$ and $g$ chosen advantageously. Using orbital stability and the equations for $(v_1,v_2)$ and $(\alpha,\beta)$ (which come from the equations for $(u_1,u_2)$ and $(z_1,z_2)$), it was found that
\begin{equation}\label{e:-ddt}
-\frac d {dt}(\mathcal I + \mathcal J) = \mathcal B(v_1) + \alpha\langle v_1,\tilde h\rangle + \alpha^2\langle f,g\rangle + \eps \,\mathcal O\left(|z|^4, \|v\|_{H_\omega^1\times L_\omega^2}^2\right),
\end{equation}
where $\mathcal B$ is a quadratic form, $f$ and $\tilde h$ are given Schwartz functions, and $H_\omega^1\times L_\omega^2$ is an exponentially weighted Sobolev space, see \eqref{e:weighted} for the definition. Next, the following coercivity result was established:
\begin{equation}\label{e:Bv1}
 \mathcal B(v_1) + \alpha\langle v_1,\tilde h\rangle + \alpha^2\langle f,g\rangle \gtrsim \alpha^2 + \|v_1\|_{H_\omega^1}^2,
 \end{equation}
for all odd $v_1\in H_\omega^1$ satisfying $\langle v_1,Y_1\rangle = 0$. Since, roughly speaking, $\alpha^2 \sim |z|^4$ and $v_i\sim u_i+ |z|^2$, this demonstrates that $\mathcal I$ and $\mathcal J$ are well adapted to the different decay rates of $z$ and $u$ that appear in \eqref{e:z4}. The proof of \eqref{e:Bv1} relied on delicate explicit estimates and changes of variables. The choice of the function $g$ was related to a nonlinear version of the Fermi-Golden rule (see \cite{RS, Si}), a non-resonance condition that ensures the internal oscillations are coupled to radiation, so that the energy of the system eventually radiates away from the kink; see \cite{KMM} for the details, and also \cite{SW, Sig} for the use of the same non-resonance condition in different contexts. The coercivity result \eqref{e:Bv1} and other estimates on $\alpha,\beta,v_1,$ and $v_2$, combined with the orbital stability of $H$, were used to establish \eqref{e:z4}.

To apply a similar method to \eqref{e:perturb} in the variable-speed case, where $b(y)$ and $d(y)$ in \eqref{e:LK} are nonzero, it is first of all necessary to understand how the spectrum of $\mathcal L_K$ differs from the spectrum of $\mathcal L$. In Section \ref{s:spec} below, we use ODE techniques to show that $\mathcal L_K$ has two simple eigenvalues $\lambda_0$ and $\lambda_1$ that are $\delta$-close to $0$ and $\frac 3 2$, and which correspond to an even eigenfunction $\bar Y_0$ and an odd eigenfunction $\bar Y_1$ respectively, which are exponentially decaying and close to $Y_0$ and $Y_1$ in $L^\infty$. With this information, in Section \ref{s:orbit} we establish the orbital stability of $K$ with respect to odd perturbations (Proposition \ref{p:orbit}) following the argument outlined in \cite{KMM}, and we perform a spectral decomposition of $\vp$ that is formally the same as in the constant-speed case. Namely, we write $\vp_1 = z_1(t)\bar Y_1 + u_1$, $\vp_2 = z_2(t)\bar Y_1 + u_2$ with $\langle u_1,\bar Y_1\rangle_p = \langle u_2, \bar Y_1\rangle_p = 0$, and define $\alpha,\beta,v_1,$ and $v_2$ in terms of $z(t)$ and $u(t)$. In Section \ref{s:vir}, we study the system for $(v_1,v_2,\alpha,\beta)$ with the same Virial functionals $\mathcal I$ and $\mathcal J$ mentioned above, with $\mu = \sqrt{\lambda_1}$ replacing $\sqrt{3/2}$ and a modified function $\bar g$ replacing $g$ in the definition of $\mathcal J$ (see Lemma \ref{l:barh} for the choice of $\bar g$). We find an expression for $\frac d {dt}(\mathcal I+\mathcal J)$ that is morally similar to \eqref{e:-ddt}. Since $\|Y_1 - \bar Y_1\|_{L^\infty} \lesssim \delta$ (Theorem \ref{t:spectrum}) and $\|p-1\|_{L^\infty}\lesssim \delta$, our $v_1$ satisfying $\langle v_1,\bar Y_1\rangle_p = 0$ will satisfy $\langle v_1, Y_1\rangle=0$ up to a small error which can be controlled in terms of $\|v_1\|_{H_\omega^1}$. This allows us to derive our coercivity result (Lemma \ref{l:D}) as a consequence of \eqref{e:Bv1} and perturbation arguments. This uses heavily the smallness assumption \eqref{e:decay} for $b$; to apply this type of method in the case where the propagation speed $c(x)$ may have large deviations from $c=1$, Virial functionals that are more specifically adapted to the resulting linear equation would likely be needed. After deriving Lemma \ref{e:D}, the conclusion of the argument (Section \ref{s:conclusion}) mainly involves controlling the higher-order terms in the dynamics of $\alpha,\beta,v_1$, and $v_2$, in much the same way as in \cite{KMM}.

Let us mention the following related results: Cuccagna \cite{C1} showed that the one-dimensional kink $H$, considered as a planar wave front in the constant-speed $\phi^4$ model in $\R^3$, is asymptotically stable with respect to general (not necessary odd) compactly supported, three-dimensional perturbations. This proof makes use of dispersive estimates due to Weder \cite{W1, W2} and relies on the better decay of these estimates available in three dimensions than in one (see also \cite{GS}). Other field equations that admit stationary kinks include the sine-Gordon equation $\partial_t^2u-\partial_x^2 u + \sin u = 0$, which also admits a one-parameter family of odd, time-periodic solutions referred to as wobbling kinks (see \cite{CQS}). Because of these solutions, the stationary kink in the sine-Gordon equation is not asymptotically stable in the energy space. As in the constant-speed case, our Theorem \ref{t:main} rules out the existence of wobbling kinks in the $\phi^4$ model in a neighborhood of $K(y)$. The question of existence or non-existence of wobbling kinks in the $\phi^4$ model has attracted attention in the past, at least in the constant-speed case (see \cite{S, KS}). We also mention the relativistic Ginzburg-Landau equation given by $\partial_t^2 u - \partial_x^2 u = W'(u)$ where $W$ is a double-well potential. Under an assumption on $W$ that excludes the $\phi^4$ model, but guarantees the existence of a kink, Kopylova and Komech \cite{KK} established asymptotic stability of the kink with respect to odd perturbations, using an approach inspired by the work of Buslaev and Sulem \cite{BS} on soliton stability for nonlinear Schr\"odinger equations (see also \cite{BP1, BP2, C}). To the author's knowledge, there are no previous results in the literature dealing with the asymptotic stability of solitary waves in an equation with non-constant speed of propagation.

%The paper is organized as follows. In Section \ref{s:stat}, we demonstrate the existence of a perturbed kink (Theorem \ref{t:exist}) using ODE perturbation techniques. Section \ref{s:spec} is concerned with characterizing the spectrum of $\mathcal L_K$, 

\section{Existence of stationary solution}\label{s:stat}

The purpose of this section is to prove Theorem \ref{t:exist}. In that proof and throughout the paper, we will need to solve integral equations of Fredholm type on the positive real line. For this, we use the following standard lemma, which we prove for the convenience of the reader:
\begin{lemma}\label{l:Fredholm}
Let $g\in L^\infty([0,\infty))$. If
\[\nu : =  \sup_{0\leq y<\infty} \int_{0}^\infty |G(y,w)| \dd w< 1,\]
then there exists a unique solution to 
\[f(y) = g(y) + \int_0^\infty G(y,w) f(w)\dd w\]
given by
\begin{equation}\label{e:series}
f(y) = g(y) + \sum_{n=1}^\infty \int_{0}^\infty \cdots \int_{0}^\infty \prod_{i=1}^n G(y_{i-1},y_i) g(y_n) \dd y_n \cdots \dd y_1,
\end{equation}
with $y_0=y$. Furthermore, one has
\[\|f\|_{L^\infty([0,\infty))}\leq  \frac 1 {1-\nu}\|g\|_{L^\infty([0,\infty))}.\]
\end{lemma}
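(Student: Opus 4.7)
The plan is to apply the Banach contraction principle (equivalently, a Neumann series argument) in the Banach space $L^\infty([0,\infty))$. Define the linear operator $T$ on $L^\infty([0,\infty))$ by
\[(Tf)(y) := \int_0^\infty G(y,w) f(w)\dd w,\]
so that the integral equation becomes the fixed-point equation $f = g + Tf$. The first step is to verify that $T$ is a bounded operator on $L^\infty$ with operator norm at most $\nu$: for any $f \in L^\infty$,
\[|(Tf)(y)| \leq \|f\|_{L^\infty} \int_0^\infty |G(y,w)|\dd w \leq \nu \|f\|_{L^\infty},\]
uniformly in $y \geq 0$. Since $\nu < 1$, $T$ is a strict contraction on $L^\infty([0,\infty))$.

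Next, I would apply either the contraction mapping theorem to the map $f \mapsto g + Tf$, or equivalently construct the inverse of $I - T$ via the Neumann series $(I-T)^{-1} = \sum_{n=0}^\infty T^n$, which converges in operator norm since $\|T^n\| \leq \nu^n$ and $\sum \nu^n < \infty$. Either formulation yields existence and uniqueness of a solution $f \in L^\infty$. The explicit formula \eqref{e:series} then follows by expanding $f = \sum_{n=0}^\infty T^n g$ and writing each $T^n g$ as an iterated integral, using Fubini (justified by absolute convergence of the integrands in $L^\infty$, together with the uniform bound on $\int |G(y,w)|\dd w$) to collect the kernels into the product form displayed in the statement.

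Finally, the $L^\infty$ bound is immediate from the Neumann series:
\[\|f\|_{L^\infty} \leq \sum_{n=0}^\infty \|T^n g\|_{L^\infty} \leq \sum_{n=0}^\infty \nu^n \|g\|_{L^\infty} = \frac{1}{1-\nu}\|g\|_{L^\infty}.\]
There is no real obstacle here: the entire argument is a direct application of the contraction mapping principle, and the only care needed is in justifying the exchange of summation and integration when writing out the explicit series \eqref{e:series}, which is handled by the uniform bound $\int_0^\infty |G(y,w)|\dd w \leq \nu$ together with $\|g\|_{L^\infty} < \infty$.
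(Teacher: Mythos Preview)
Your proposal is correct and follows essentially the same approach as the paper: both arguments are Neumann series computations, with the paper carrying out directly the estimate $\|T^n g\|_{L^\infty}\leq \nu^n\|g\|_{L^\infty}$ by iterated integration, while you phrase the same bound in operator-theoretic language. Your version is slightly cleaner in that invoking the contraction mapping principle also delivers uniqueness explicitly, whereas the paper's proof only verifies convergence of the series and the $L^\infty$ bound without separately addressing uniqueness.
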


\begin{proof}
We check directly that the iteration \eqref{e:series} converges:
\begin{align*}
\left| \int_0^\infty\right.& \cdots \left.\int_0^\infty \prod_{i=1}^n G(x_{i-1},x_i) g(x_n)\dd x_n \cdots \dd x_1\right|\\
&\leq \|g\|_{L^\infty} \int_0^\infty \cdots \int_0^\infty \prod_{i=1}^{n-1} |G(x_{i-1},x_i)| \int_0^\infty |G(x_{n-1},x_n)| \dd x_n \cdots \dd x_1\\
&\leq  \|g\|_{L^\infty} \nu \int_0^\infty \cdots \int_0^\infty \prod_{i=1}^{n-1} |G(x_{i-1},x_i)| \dd x_{n-1} \cdots \dd x_1\\
&\leq \cdots\\
&\leq \|g\|_{L^\infty} \nu^n,
\end{align*}
so the series converges, and $\|f\|_{L^\infty} \leq \dfrac{1}{1-\nu} \|g\|_{L^\infty}$.
\end{proof}

Now we find a stationary solution to \eqref{e:psi}, i.e. an odd $K$ solving
\begin{equation}\label{e:K}
-\partial_y^2 K  + b(y) \partial_y K = K - K^3.
\end{equation}
\begin{proof}[Proof of Theorem \textup{\ref{t:exist}}]
We look for $H_\delta(y)$ such that $K(y) = H(y) + H_\delta(y)$ solves \eqref{e:K}, where $H(y) = \tanh(y/\sqrt 2)$ satisfies $-H_{yy} = H-H^3$. If $H_\delta(y)$ solves
\begin{equation}\label{e:Hdelta}
\begin{cases}-\partial_y^2H_\delta + b(y)\partial_y H_\delta + (3H^2-1)H_\delta = -b(y)\partial_yH - H_\delta^3 - 3HH_\delta^2,\\
H_\delta(0) = 0, \quad H_\delta \rightarrow 0 \mbox{ as } y\rightarrow \infty,\end{cases}
\end{equation}
then we can then extend $H_\delta$ to the real line by oddness and obtain $K=H+H_\delta$. We write \eqref{e:Hdelta} as
\begin{equation*}
\mathcal L_b H_\delta = -H_\delta^3 - 3HH_\delta -b(y)\partial_y H,
\end{equation*}
where 
\[\mathcal L_b = -\partial_y^2 + b(y) \partial_y+  (3H^2-1) = \mathcal L + b(y) \partial_y.\]
We will find $H_\delta$ by computing a Green's function for $\mathcal L_b$ on $[0,\infty)$. A fundamental system for $\mathcal L Y=0$ is given by 
\begin{align*}
Y_0(y) &= \frac 1 2 \sech^2(y/\sqrt 2),\\
Z_0(y) &= \int_0^y\cosh^4(s/\sqrt 2)\dd s\\
& =- \frac 1 {32} \sech^2(y/\sqrt 2)\left(12y + 8\sqrt 2\sinh(\sqrt 2 y) + \sqrt 2 \sinh(2\sqrt 2 y)\right).
\end{align*} 
To find $Y_b, Z_b$ with $\mathcal L_bY_b = \mathcal L_b Z_b = 0$, we first make the substitution $Y_b = Y_0 + V_b$, which leads to the equation
\[\mathcal L V_b = -b(y)\partial_y(Y_0+V_b)\]
for $V_b(y)$. This can be written as the integral equation
\begin{equation}\label{e:V}
V_b(y) = g(y) +\int_0^\infty G_0(y,w) b(w) \partial_wV_b(w)\dd w, \qquad y\geq 0,
\end{equation}
where
\[g(y) = \int_0^\infty G_0(y,w) b(w) \partial_{w} Y_0(w)\dd w,\]
and
\begin{equation}\label{e:G0}
G_0(y,w) = \begin{cases}Y_0(y)Z_0(w), &\quad 0\leq w < y,\\
						 Y_0(w)Z_0(y), &\quad 0\leq y <w,\end{cases}
\end{equation}
%With this Green's function, $V_0(0) = 0$ and $V_1(0)=0$ will hold.
Using $|Y_0(y)|\lesssim e^{-\sqrt 2|y|}, |Z_0(y)|\lesssim e^{\sqrt 2|y|}, |Y_0'(y)|\lesssim e^{-\sqrt 2 |y|}$, $|Z_0'(y)| \lesssim e^{2\sqrt 2|y|}$, and the bound \eqref{e:decay} for $b$, we see that
\begin{align}\label{e:f1}
g(y) & = Y_0(y)\int_0^y Z_0(w) b(w) \partial_w Y_0(w)\dd w + Z_0(y)\int_y^\infty Y_0(w)b(w)\partial_w Y_0(w)\dd w\nonumber\\
&\lesssim \delta\left(e^{-\sqrt 2 y} \int_0^y e^{-\sqrt 2 w} \dd w + e^{\sqrt 2 y}\int_y^\infty e^{-3\sqrt 2 w} \dd w \right)\\
&\lesssim \delta e^{-\sqrt 2 y}.\nonumber
\end{align}
%\begin{align}\label{e:f1}
%f_1(y) & = U_0(y)\int_0^y U_1(w) b(w) \partial_w U_1(w)\dd w + U_1(y)\int_y^\infty U_0(w)b(w)\partial_w U_1(w)\dd w\nonumber\\
%&\lesssim \delta\left(e^{-\sqrt 2 y} \int_0^y e^{(3\sqrt 2-c_0)w} \dd w + e^{\sqrt 2 y}\int_y^\infty e^{(\sqrt 2-c_0) w} \dd w \right)\\
%&\lesssim \delta e^{-\sqrt 2 y}.\nonumber
%\end{align}
%By similar reasoning, we have $|f_0(y)| \lesssim \delta e^{-\sqrt 2 y}$ also. 
Now, we integrate by parts in \eqref{e:V} to obtain the Fredholm equation
\[V_b(y) = g(y) - \int_0^\infty \partial_w\left[G_0(y,w)b(w)\right]V_b(w)\dd w.\]
There are no boundary terms because $b(0)=0$.
%, $G_0$ is continuous at $w=y$, and $b$ decays sufficiently fast as $y\rightarrow \infty$. (The last fact will be justified retroactively when we find $V_0$.) 
By \eqref{e:decay} and the above bounds on $Y_0$ and $Z_0$, we have
\begin{align*}
\sup_{[0,\infty)} \int_0^\infty |\partial_w[G_0(y,w)b(w)]|\dd w &\leq \sup_{[0,\infty)}\left( |Y_0(y)|\int_0^y|Z_0'(w)b(w) + Z_0(w)b'(w)|\dd w\right.\\
&\qquad\qquad\left.+|Z_0(y)|\int_y^\infty|Y_0'(w)b(w)+Y_0(w)b'(w)|\dd w\right)\\
&\leq \sup_{[0,\infty)} C\delta \left(e^{-\sqrt 2 y}(e^{\sqrt 2y}+e^{\sqrt 2 y})\right.\\
&\quad \qquad\left. + e^{\sqrt 2 y}(e^{-2\sqrt 2y} + e^{-\sqrt 2 y}) \right)<1,
\end{align*}
if $\delta$ is sufficiently small, so by Lemma \ref{l:Fredholm}, a unique solution $V_b$ exists, and $\|V_b\|_{L^\infty} \leq C\|g\|_{L^\infty}\leq C\delta$.  It is clear from formula \eqref{e:series} in Lemma \ref{l:Fredholm} and the decay of $g$ that $|V_b| = |Y_b-Y_0| \lesssim \delta e^{-\sqrt 2y}$.

Using reduction of order, we obtain a second independent solution $Z_b$ given by
\[Z_b(y) = Y_b(y)\int_0^y \frac{\exp(\int_0^w b(s)\dd s)}{(Y_b(w))^2}\dd w.\]
We have $Z_b(0) = 0$, $Z_b'(0) = 1$, and $Z_b(y) \lesssim e^{\sqrt 2 y}$. Let $p = Y_b Z_b' - Y_b'Z_b   = \exp(\int_0^y b(s)\dd s)$, and define the Green's function $G_b$ for $\mathcal L_b$:
\[G_b(y,w) = \begin{cases}Y_b(y)Z_b(w)/p(w), &\quad 0\leq w < y,\\
						  Y_b(w)Z_b(y)/p(w), &\quad 0\leq y <w.\end{cases}\]
Note that $G_b(0,w) = 0$. 

We can now write \eqref{e:Hdelta} as a nonlinear integral equation for $H_\delta(y)$:
\begin{equation}\label{e:TH}
H_\delta(y) =  (\mathcal T H_\delta)(y) := h(y) - \int_0^\infty G_b(y,w)\left[H_\delta^3(w) + 3H(w)H_\delta^2(w)\right]\dd w,
\end{equation}
where $h(y) = -\int_0^\infty G_b(y,w)b(w)\partial_w H(w)\dd w$. We will show that $\mathcal T$ has a unique fixed point in a suitable class.  Define the norm
\[\|\eta\|_{\sim} := \sup_{0\leq y<\infty} e^{\sqrt 2y}|\eta(y)|.\]
Note first that, since $\partial_y H = \frac 1 {\sqrt 2} \sech^2(y/\sqrt 2)$, we have $|b(w) H'(w)| \lesssim  \delta  e^{-2\sqrt 2|w|}$. By estimating the integral in a similar manner to \eqref{e:f1}, we see that $|h(y)|\leq C_1 \delta e^{-\sqrt2 y}$ for some constant $C_1$.
Let $C_0 = 2C_1$, and define the set $\mathcal A_\delta = \{\eta \in C([0,\infty)): \|\eta(y)\|_{\sim}\leq C_0\delta \}$. For $\eta \in \mathcal A_\delta$, we check directly that 
\[|\eta^3(w) + 3H(w)\eta^2| \leq 4\delta^2 C_0^2 e^{-2\sqrt 2w},\]
and
\begin{align*}
\Big|\int_0^\infty &G_b(y,w)\left[\eta^3(w) + 3H(w)\eta^2(w)\right]\dd w\Big|\\ 
&\leq 4\delta^2 C_0^2 \left(|Y_b(y)|\int_0^y \frac{|Z_b(w)|}{p(w)} e^{-2\sqrt 2 w}\dd w + |Z_b(y)|\int_y^\infty \frac{|Y_b(w)|}{p(w)} e^{-2\sqrt 2 w}\dd w\right)\\
&\leq 4\delta^2 C_0^2 C_2e^{-\sqrt 2y}.
\end{align*}
Then, if $\delta <\dfrac 1 {8 C_0C_2}$, we have
\[|\mathcal T \eta (y)| \leq C_1\delta e^{-\sqrt 2 |y|} + 4\delta^2 C_0^2 C_2 e^{-\sqrt 2|y|} < 2C_1 \delta e^{-\sqrt 2|y|}, \]
so $\mathcal T\eta \in \mathcal A_\delta$. Finally, for $\eta_1, \eta_2\in \mathcal A_\delta$, we have
\begin{align*}
\left|\eta_1^3 - \eta_2^3 + 3H(\eta_1^2-\eta_2^2)\right| &\leq |\eta_1 - \eta_2|\left|\eta_1^2 + \eta_1\eta_2+ \eta_2^2 + 3H(\eta_1+\eta_2)\right|\\ 
&\leq 9C_0\delta e^{-\sqrt 2 y}|\eta_1 - \eta_2|,
\end{align*}
and proceeding as before,
\begin{align*}
|\mathcal T(\eta_1)(y) - \mathcal T(\eta_2)(y)| &\leq 9C_0\left(\delta |Y_b(y)|\int_0^y \frac{|Z_b(w)|}{p(w)}e^{-\sqrt 2 w}|\eta_1(w)-\eta_2(w)|\dd w\right.\\
&\qquad\qquad\left.+ |Z_b(y)|\int_y^\infty \frac{|Y_b(w)|}{p(w)}e^{-\sqrt 2 w}|\eta_1(w)-\eta_2(w)|\dd w\right)\\
&\lesssim \delta e^{-\sqrt 2y}\|\eta_1 - \eta_2\|_{\sim},
\end{align*}
so that $\|\mathcal T(\eta_1) - \mathcal T(\eta_2)\|_{\sim} \leq C\delta \|\eta_1 - \eta_2\|_{\sim}.$ If $\delta < 1/C$, then $\mathcal T$ is a contraction in $\mathcal A_\delta$ and a unique solution $H_\delta$ to \eqref{e:Hdelta} exists in $\mathcal A_\delta$.

By differentiating \eqref{e:TH}, we verify that 
\[|H_\delta(y)| + |H_\delta'(y)| \lesssim \delta e^{-\sqrt 2|y|}.\]
\end{proof}

\section{Spectrum of linearized operator}\label{s:spec}

We now analyze the spectrum of $\mathcal L_K = -\partial_y^2 -b(y)\partial_y -1 + 3K^2$. This operator can be written $\mathcal L_K = \mathcal L - b(y)\partial_y + d(y)$, a perturbation of the classical operator $\mathcal L = -\partial_y^2 -1 + 3H^2(y)$. Here, $d(y) = 3H_\delta^2(y) + 6H(y)H_\delta(y)$. We find that the $L^2$-spectrum $\sigma(\mathcal L_K)$ of $\mathcal L_K$ is qualitatively similar to the spectrum of $\mathcal L$ in the following sense:
\begin{theorem}\label{t:spectrum}
The operator $\mathcal L_K$ has real, simple eigenvalues $\lambda_0$, $\lambda_1$ such that $|\lambda_0|\lesssim \delta$ and $|\lambda_1 - \frac 3 2|\lesssim \delta$. 
%The spectrum $\sigma(\mathcal L_K) = \{\lambda_0,\lambda_1\}\cup[2,\infty)$, where $\lambda_i$ are real, simple eigenvalues, $|\lambda_0|\lesssim \delta$, and $|\lambda_1 - \frac 3 2|\lesssim \delta$. 
The corresponding eigenfunctions $\bar Y_0$ and $\bar Y_1$ are even and odd respectively  %exponentially decaying at the rates $|\bar Y_0(y)|\lesssim e^{-\sqrt 2|y|}$, $|\bar Y_1(y)|\lesssim e^{-|y|/\sqrt 2}$, 
and satisfy 
\begin{align*}
|\bar Y_0(y) - Y_0(y)|+|\bar Y_0'(y) - Y_0'(y)| &\lesssim \delta e^{-\sqrt 2 |y|},\\
|\bar Y_1(y) - Y_1(y)|+|\bar Y_1'(y) - Y_1'(y)| &\lesssim \delta e^{- |y|/\sqrt 2},
\end{align*}
where $Y_0$ and $Y_1$ are the eigenfunctions of $\mathcal L$ corresponding to $0$ and $\frac 3 2$. Furthermore, $\lambda_1$ is the only discrete eigenvalue of $\mathcal L_K$ corresponding to an odd eigenfunction, and the continuous spectrum $\sigma_c(\mathcal L_K) = [2,\infty)$.
\end{theorem}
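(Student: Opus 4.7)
The plan is to combine an ODE-based Jost function construction with perturbation theory, writing $\mathcal L_K = \mathcal L + T$ where $T = -b(y)\partial_y + d(y)$ has coefficients decaying like $e^{-\sqrt 2 |y|}$ by \eqref{e:decay} and Theorem \ref{t:exist}. Since the weight $p$ satisfies $\|p-1\|_{L^\infty} \lesssim \delta$, the space $L^2(p\,\dd y)$ on which $\mathcal L_K$ is self-adjoint is norm-equivalent to $L^2$, and spectral statements in the two spaces coincide. First I would dispose of the continuous spectrum using a Weyl-type argument: $T$ is relatively $\mathcal L$-compact thanks to the exponential decay of its coefficients, so $\sigma_{\mathrm{ess}}(\mathcal L_K) = \sigma_{\mathrm{ess}}(\mathcal L) = [2,\infty)$, and self-adjointness in $L^2(p\,\dd y)$ plus absence of embedded eigenvalues in the essential spectrum (verifiable from the Jost analysis below) will give $\sigma_c(\mathcal L_K) = [2,\infty)$.

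Next I would construct Jost solutions $Y_+(\cdot;\lambda)$ of $(\mathcal L_K - \lambda)Y = 0$ for each real $\lambda < 2$. Setting $\mu = \sqrt{2-\lambda}$ and $Y_+ = e^{-\mu y}W$ converts the eigenvalue equation into a Volterra integral equation for $W$; because $b$ and $3(K^2-1)$ both decay exponentially, a Picard iteration in the spirit of Lemma \ref{l:Fredholm} yields a unique solution with $W(y;\lambda) \to 1$ as $y \to +\infty$, analytic in $\lambda$. Comparing against the analogous unperturbed Jost solution $Y_+^{(0)}$ of $(\mathcal L - \lambda)Y = 0$ and using the $O(\delta)$ size of $b$, $d$ in the integral kernels gives
\[
|Y_+(y;\lambda) - Y_+^{(0)}(y;\lambda)| + |\partial_y Y_+(y;\lambda) - \partial_y Y_+^{(0)}(y;\lambda)| \lesssim \delta\, e^{-\mu y}, \qquad y \geq 0,
\]
locally uniformly in $\lambda < 2$.

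The key reduction is that $\mathcal L_K$ preserves parity (since $b$ is odd and $K^2$ is even), so odd $L^2$-eigenfunctions are precisely the odd extensions of Jost solutions $Y_+(\cdot;\lambda)$ satisfying $Y_+(0;\lambda) = 0$, and even eigenfunctions correspond to $\partial_y Y_+(0;\lambda) = 0$. I would therefore introduce the analytic Evans-type functions
\[
E_{\mathrm{odd}}(\lambda) := Y_+(0;\lambda), \qquad E_{\mathrm{ev}}(\lambda) := \partial_y Y_+(0;\lambda).
\]
For the unperturbed operator $\mathcal L$, the functions $E_{\mathrm{odd}}^{(0)}$ and $E_{\mathrm{ev}}^{(0)}$ have simple zeros at $\lambda = 3/2$ and $\lambda = 0$ respectively, corresponding to the simple $\mathcal L$-eigenfunctions $Y_1$ and $Y_0$; simplicity of the eigenvalues translates into nonvanishing of $(E^{(0)})'$ at these points. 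Combined with the $O(\delta)$ closeness above, the implicit function theorem (or Rouché on a small disk around each unperturbed zero) produces unique zeros $\lambda_1$ and $\lambda_0$ with $|\lambda_1 - 3/2| + |\lambda_0| \lesssim \delta$, which are simple eigenvalues of $\mathcal L_K$. The pointwise estimates for $\bar Y_i - Y_i$ then follow from the Jost bounds with $\mu = \sqrt{2-\lambda_i}$, yielding the advertised rates $e^{-\sqrt 2|y|}$ and $e^{-|y|/\sqrt 2}$ since $\sqrt{2-\lambda_0}$ and $\sqrt{2-\lambda_1}$ are $\delta$-close to $\sqrt 2$ and $1/\sqrt 2$ respectively, after normalizing the eigenfunctions to match $Y_0, Y_1$ at infinity.

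Uniqueness of $\lambda_1$ as the only odd discrete eigenvalue follows because $E_{\mathrm{odd}}^{(0)}$ is bounded uniformly away from zero on any compact subset of $(-\infty,2) \setminus \{3/2\}$, so by $\delta$-smallness $E_{\mathrm{odd}}$ cannot vanish on such a set either. The main obstacle I anticipate is controlling the Jost construction uniformly as $\lambda \to 2^-$, where $\mu$ degenerates and the naive iteration norms blow up; to rule out a threshold eigenvalue at $\lambda = 2$ and to exclude accumulation of extra discrete spectrum there, one has to exploit the exponential decay of $b$ and $3(K^2 - 1)$ more carefully, typically through a $\lambda$-dependent weight in the Neumann iteration or a separate small-$\mu$ analysis using the explicit unperturbed Jost solutions at threshold.
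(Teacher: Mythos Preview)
Your Jost/Evans-function approach is a valid alternative to the paper's proof, which instead perturbs each eigenfunction separately. Concretely, the paper fixes $\lambda$ near $0$ (resp.\ near $3/2$), writes $\bar Y_0 = Y_0 + U_\lambda$ (resp.\ $\bar Y_1 = Y_1 + V_\lambda$), and solves an integral equation for the correction using the explicit Green's function of $\mathcal L$ (resp.\ $\mathcal L - \tfrac32$) on $[0,\infty)$ via Lemma~\ref{l:Fredholm}. The eigenvalue is then selected by a direct shooting argument: the paper shows $U_\lambda'(0)$ (resp.\ $V_\lambda(0)$) is continuous and strictly monotone in $\lambda$, so it vanishes at a unique $\lambda_0$ (resp.\ $\lambda_1$). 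This is essentially your Evans-function zero condition, but established by monotonicity rather than Rouch\'e or the implicit function theorem, and without needing analyticity of the Jost data in $\lambda$. Your route is more systematic and packages the whole discrete spectrum into the zero set of two analytic functions; the paper's route is more elementary and, because it works with the Green's function at the \emph{unperturbed} eigenvalue rather than at the running $\lambda$, gives the decay rates $e^{-\sqrt2|y|}$ and $e^{-|y|/\sqrt2}$ directly, whereas your Jost bound naturally yields $e^{-\sqrt{2-\lambda_i}|y|}$ and you still owe the step $|Y_+^{(0)}(\cdot;\lambda_i)-Y_i| \lesssim \delta e^{-\sqrt2|y|}$ to reach the stated estimate.

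The threshold issue you flag is real in your framework, and the paper avoids it entirely by a short abstract argument you should consider adopting: before any ODE construction, it shows that any $\lambda\in\sigma(\mathcal L_K)\cap\rho(\mathcal L)$ must satisfy $\mathrm{dist}(\lambda,\sigma(\mathcal L))\le C_0\delta$, by taking a Weyl sequence for $\mathcal L_K-\lambda$ and using $\|(\mathcal L_K-\mathcal L)v_n\|\lesssim\delta$. This confines all discrete spectrum to $O(\delta)$-neighborhoods of $\{0,\tfrac32\}\cup[2,\infty)$ and removes the need for any uniform-in-$\lambda$ control near $\lambda=2$. For uniqueness of the odd eigenvalue, the paper then invokes Sturm--Liouville parity alternation (Dirichlet and Neumann half-line eigenvalues interlace), which is quicker than tracking $E_{\mathrm{odd}}$ globally.
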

\begin{proof}
First, recall that $\mathcal L_K$ is self-adjoint with respect to the $\langle \cdot,\cdot\rangle_p$ inner product, so $\sigma(\mathcal L_K)\subset\R$. Next, by general theory (see, for example, \cite[Chapter 18]{HS}) the continuous spectrum of $\mathcal L$ is stable under the relatively compact perturbation $-b\partial_y + d$. (In other words, $(-b\partial_y+d)(\mathcal L - z)^{-1}$ is a compact operator for any $z\in \rho(\mathcal L)$.) Therefore, $\sigma_c(\mathcal L_K) = \sigma_c(\mathcal L) = [2,\infty)$. 

We now show that $\sigma(\mathcal L_K)$ lies inside the $C_0\delta$ neighborhood of $\sigma(\mathcal L)$ for some constant $C_0$. Assume that $\lambda \in \rho(\mathcal L)\cap \sigma(\mathcal L_K)$, where $\rho(\mathcal L)$ denotes the resolvent set of $\mathcal L$, and let $d_0 = \dst(\lambda,\sigma(\mathcal L))$. We may assume $|\lambda|\leq 3$ because elliptic existence theory implies $(-\infty,-3) \subset\rho(\mathcal L_K)$. Since $\lambda\in\rho(\mathcal L)$, for $w\in L^2(\R)$ we have
\[\|(\mathcal L - \lambda I)^{-1} w\| \leq \frac{\|w\|}{d_0}.\]
(For the duration of this proof, $\|\cdot\|$ denotes the norm in $L^2(\R)$.) This is equivalent to $\|(\mathcal L-\lambda I)v\| \geq d_0\|v\|$ for all $v\in D(\mathcal L)$. Since $\lambda \in \sigma(\mathcal L_K)$, there exists a sequence $v_n\in D(\mathcal L_K)= D(\mathcal L)$ such that $\|v_n\|=1$ and $(\mathcal L_K - \lambda I)v_n\rightarrow 0$ in $L^2(\R)$. But since $\|(\mathcal L - \lambda I)v_n\| \geq d_0$, we have 
\[\|(\mathcal L_K - \mathcal L)v_n\| = \|bv_n' + dv_n\| \geq \frac {d_0} 2\] 
for $n$ sufficiently large. It is clear that $\|bv_n'\| \lesssim \delta \|v_n'\|$. Looking at $\|v_n'\|$, we have
\begin{align*}
\int (v_n')^2 = \int v_n(-v_n'') &= \int[ v_n((\mathcal L_K - \lambda I)v_n - bv_n' - (3H^2-1+d-\lambda)v_n)]\\
&\leq \|v_n\|\|(\mathcal L_K - \lambda I)v_n\| + \frac 1 2 \int b'(v_n)^2 + C\|v_n\|\\
&\leq \|(\mathcal L_K - \lambda I)v_n\| + C\|v_n\|.
\end{align*}
Since $\|(\mathcal L_K - \lambda I)v_n\|\rightarrow 0$ by assumption, we have that for $n$ sufficiently large,
\[\frac {d_0}2 \leq \|bv_n'\| + \|d v_n\|\lesssim \delta (\|v_n'\| + \|v_n\|) \lesssim \delta,\]
or $\dst(\lambda,\sigma(\mathcal L))\leq C_0\delta$, as desired.

Next, we show that $\mathcal L_K$ has exactly one eigenvalue in $[-C_0\delta,C_0\delta]$. For some $\lambda_\ast$ to be determined satisfying $\lambda_\ast\geq C_0\delta$ but $|\lambda^*|\lesssim \delta$,  we take $\lambda\in[-\lambda_\ast,\lambda_\ast]$ and look for $\bar Y_0(y)\in L^2$ satisfying $\mathcal L_K \bar Y_0 = \lambda \bar Y_0$. Letting $\bar Y_0 = Y_0 + U_\lambda$, we obtain the following equation for $U_\lambda$:
\begin{equation}\label{e:eig0}
\mathcal L U_\lambda =   bY_0'+(\lambda -d)Y_0 + bU_\lambda' + (\lambda -d)U_\lambda.
\end{equation}
Note that the solution to this equation on $(-\infty,\infty)$ must be even, because otherwise, writing $U_\lambda = U^o + U^e$, the odd part would satisfy $\mathcal L U^o = b(U^o)' + (\lambda - d)U^o$, which implies $\langle U^o,\mathcal L U^o\rangle = \langle U^o, b(U^o)' + (\lambda-d)U^o\rangle \lesssim \delta \|U^o\|^2$, a contradiction because $U^o$ is orthogonal to the even eigenfunction $Y_0$, so by the spectral theorem, $\langle U^o, \mathcal L U^o\rangle \geq \frac 3 2 \|U^o\|^2$.

We write \eqref{e:eig0} on $[0,\infty)$ as the integral equation
\begin{equation}\label{e:Z0}
U_\lambda(y) = h_0(y) + \int_0^\infty G_0(y,w) [b(w)U_\lambda'(w) + (\lambda - d(w))U_\lambda(w)]\dd w,
\end{equation}
where $G_0$ is the Green's function defined in \eqref{e:G0}, and
\begin{align*}
h_0(y) &= \int_0^\infty G_0(y,w) [b(w)Y_0'(w) + (\lambda - d(w))Y_0(w)]\dd w\\
&=Y_0(y)\int_0^y Z_0(w) [b(w)Y_0'(w) + (\lambda - d(w))Y_0(w)]\dd w \\
&\quad + Z_0(y)\int_y^\infty Y_0(w)[b(w)Y_0'(w) + (\lambda - d(w))Y_0(w)]\dd w.
\end{align*}
By the asymptotics of $Y_0$ and $Z_0$, we have $|h_0(y)|\lesssim \delta ye^{-\sqrt 2 y}$. To solve \eqref{e:Z0}, we check that
\begin{align*}
\int_{0}^\infty  |\partial_w G_0(y&,w)b(w) + (b'(w) - \lambda + d(w)) G_0(y,w)| \dd w\\
&\lesssim \delta \left(Y_0(y)\int_0^y (Z_0'(w)e^{-\sqrt 2 w}+Z_0(w))\dd w + Z_0(y)\int_y^\infty Y_0(w)\dd w \right)\\
&\lesssim \delta,
\end{align*}
uniformly in $y\geq 0$. (Recall that $Z_0'(w)\lesssim e^{2\sqrt 2 w}$.)
%)for all $y\geq 0$. which is satisfied as long as $|\lambda|\lesssim \delta$, because 
%\[\int_0^\infty |G_0(y,w)|\dd w \lesssim \left(e^{-\sqrt 2 y}\int_0^y e^{\sqrt 2 w} \dd w + e^{\sqrt 2 y}\int_y^\infty e^{-\sqrt 2 w}\dd w \right)\leq  C,\]
%where $C$ is independent of $y$, and $|b' + \lambda -d|\lesssim \delta$. (The $\partial_wG_0 b$ term will not cause any problems because of the decay of $b$.) 
Lemma \ref{l:Fredholm} implies $U_\lambda$ exists on $[0,\infty)$ for each $\lambda$, $\|U_\lambda\|_{L^\infty} \lesssim \|h_0\|_{L^\infty} \lesssim \delta$, and $|U_\lambda(y)|\lesssim \delta e^{-\sqrt 2 y}$.
To extend by evenness to the real line, we would need $U_\lambda'(0) = 0$. Note that since $Z_0'(0) = 1$, \eqref{e:Z0} implies
\begin{align}\label{e:Uprime}
U_\lambda'(0) &= \int_0^\infty Y_0[b(Y_0+U_\lambda)' + (\lambda - d)(Y_0+U_\lambda)]\dd w\nonumber\\ 
&= \lambda \int_0^\infty Y_0(Y_0+U_\lambda)\dd w - \int_0^\infty [(d + b')Y_0 + bY_0'](Y_0+U_\lambda) \dd w.
\end{align}
Since $\int_0^\infty Y_0(Y_0+U_\lambda) \geq \frac 1 2-C\delta\geq \frac 1 4$ and $\|Y_0+U_\lambda\|_{L^\infty}\leq 1$, we choose 
\[\lambda_\ast = \max\left(C_0\delta, \,5\int_0^\infty \left|(d+b')Y_0+bY_0'\right|\dd w\right),\]
so that $U_{\lambda_\ast}'(0) >0$, $U_{-\lambda_\ast}'(0)<0$, and $|\lambda_\ast|\lesssim \delta$. We will now show that $U_\lambda'(0)$ depends on $\lambda$ in a continuous and monotonic way.

For $\lambda, \mu \in [-\lambda_\ast,\lambda_\ast]$, observe that $\Delta = U_{\lambda} - U_{\mu}$ satisfies 
\[\Delta(y) = g_\Delta(y) + \int_0^\infty G_0(y,w)(b\Delta' - d\Delta)\dd w,\]
with
\[g_\Delta(y) = \int_0^\infty G_0(y,w)(\lambda U_{\lambda} - \mu U_{\mu})\dd w.\]
Since $|\lambda U_{\lambda} - \mu U_{\mu}|\lesssim \delta e^{-\sqrt 2 y}$, this can be solved as above, using Lemma \ref{l:Fredholm}, and $\|\Delta\|_{L^\infty} \lesssim \|g_\Delta\|_{L^\infty}$. We have
\[\|g_\Delta\|_{L^\infty} \lesssim \|\lambda U_{\lambda} - \mu U_{\mu}\|_{L^\infty} = \|(\lambda - \mu)U_{\lambda} + \mu \Delta\|_{L^\infty} \leq C_1|\lambda-\mu| + C_2\delta \|\Delta\|_{L^\infty}.\]
Combining this with $\|\Delta\|_{L^\infty} \lesssim \|g_\Delta\|_{L^\infty}$, we conclude $\|U_{\lambda} - U_{\mu}\|_{L^\infty} \lesssim |\lambda - \mu|$ if $\delta$ is sufficiently small. 

Let $\lambda > \mu$. By \eqref{e:Uprime}, we have
\begin{align}\label{e:mono}
U_{\lambda}'(0) -U_{\mu}'(0) &=  (\lambda - \mu)\int_0^\infty Y_0^2 \dd w\\
& + \int_0^\infty [ Y_0(\lambda U_{\lambda} - \mu U_{\mu}) - (U_{\lambda} - U_{\mu})(d+b')Y_0 + bY_0')]\dd w.\nonumber
\end{align}
Since $\|U_{\lambda} - U_{\mu}\|_{L^\infty} \lesssim |\lambda - \mu|$ and $\|\lambda U_{\lambda} - \mu U_{\mu}\|_{L^\infty} \lesssim \delta |\lambda - \mu|$, the second integral in \eqref{e:mono} is bounded in absolute value by a constant times $\delta |\lambda-\mu|$. This implies $U_{\lambda}'(0) > U_{\mu}'(0)$ and that $U_\lambda'(0)$ depends continuously on $\lambda$. We conclude $U_\lambda'(0) = 0$ for a unique $\lambda_0\in [-\lambda_\ast,\lambda_\ast]$. This $\lambda_0$ is an eigenvalue of $\mathcal L_K$ corresponding to the even, exponentially decaying eigenfunction $\bar Y_0 = Y_0 + U_{\lambda_0}$. Differentiating \eqref{e:Z0}, we conclude $|\bar Y_0'(y) - \bar Y_0(y)|\lesssim \delta e^{-\sqrt 2|y|}$.

Now we will find an eigenfunction $\bar Y_1$ corresponding to some $\lambda_1$ close to $\frac 3 2 $. For $\mathcal L$, note that 
\begin{align*}
Y_1(y) &= 2^{-3/4}3^{1/2} \tanh\left(\frac y{\sqrt 2}\right)\sech\left(\frac y{\sqrt 2}\right)\\
Z_1(y) &= -\dfrac 1{4} \sech\left(\frac y {\sqrt 2}\right)\left[-5 + 3\sqrt 2 y\tanh\left(\frac y {\sqrt 2}\right) + \cosh\left(\sqrt 2 y\right)\right]
\end{align*}
form a fundamental system for $\mathcal L - \frac 3 2 I$ on $[0,\infty)$ with $Y_1(0) = 0$ and $Z_1'(0) = 0$. Following the above method, we will take $\lambda\in [\frac 3 2 - \lambda^\ast,\frac 3 2 + \lambda^\ast]$ with $|\lambda^\ast|\lesssim \delta$ to be determined. If $\bar Y_1$ satisfies $\mathcal L_K \bar Y_1 =\lambda \bar Y_1$ on $[0,\infty)$, then letting $\bar Y_1 = Y_1 + V_\lambda$, we have
\begin{equation}\label{e:Vlambda}
\mathcal L V_\lambda -\frac 3 2 V_\lambda = bY_1' + \left(\lambda - \frac 3 2 - d\right)Y_1 + b V_\lambda ' + \left(\lambda - \frac 3 2 - d\right)V_\lambda.
\end{equation}
Similarly to above, we write $V_\lambda = V^e + V^o$. If the even part $V^e \not\equiv 0$, then $V^e$ satisfies $\mathcal L V^e = b(V^e)' + (\lambda-d)V^e$, so that
\begin{equation}\label{e:LVe}
\left|\frac{\langle V^e, \mathcal L V^e\rangle} {\|V^e\|^2}-\frac 3 2\right| \lesssim \delta.
\end{equation} 
However, for $V^e$ we have
 \[ 0 = \langle \bar Y_0, \bar Y_1\rangle_p = \langle \bar Y_0, Y_1 + V^e + V^o\rangle_p = \langle \bar Y_0, V^e\rangle_p,\]
which implies
\[|\langle Y_0, V^e \rangle_p| = |\langle \bar Y_0 - Y_0, V^e\rangle_p| \leq  \|Y_0-\bar Y_0\|_{L^\infty(\R)}\|p\|\|V^e\|\lesssim \delta \|V^e\|,\]
and therefore, 
\[|\langle Y_0, V^e\rangle| \leq |\langle Y_0, V^e\rangle_p| + \|p-1\|_{L^\infty(\R)} |\langle Y_0,V^e\rangle_p|,\]
so that $|\langle Y_0, V^e\rangle|\lesssim \delta \|V^e\|$. Now, since $\langle Y_1, V^e\rangle = 0$, we can write $V^e = a_0 Y_0 + a_1 W$, with $\langle Y_0, W\rangle = 0$ and $a_0 = \langle Y_0, V^e\rangle$. By the spectral theorem, $\langle W, \mathcal L W\rangle/\|W\| \geq 2$, which contradicts \eqref{e:LVe} because $\langle V^e, \mathcal L V^e\rangle = a_1^2\langle W, \mathcal L W\rangle$ and $\|W\| \geq (1-C\delta)\|V^e\|$ for some $C$. We conclude $V_\lambda$ is odd.
 
 On $[0,\infty)$, \eqref{e:Vlambda} is equivalent to
\begin{equation}\label{e:Vintegral}
V_\lambda = h_1(y) + \int_0^\infty G_1(y,w)[b(w)V_\lambda'(w) + (\lambda - \frac 3 2 - d(w))V_\lambda(w)]\dd w,
\end{equation}
where $h_1(y) = \int_0^\infty G_1(y,w)[b(w)Y_1'(w) + (\lambda - \frac 3 2 - d(w))Y_1(w)]\dd w$ and
\begin{equation*}
G_1(y,w) = \begin{cases} Y_1(y)Z_1(w), &0\leq w < y ,\\
				       Z_1(y)Y_1(w), &0\leq y < w.\end{cases}
\end{equation*}
Using the asymptotics $|Y_1(y)|+ |Y_1'(y)|\lesssim e^{-y/\sqrt 2}$ and $|Z_1(y)|+|Z_1'(y)|\lesssim e^{y/\sqrt 2}$, the same arguments used in solving \eqref{e:Z0} above imply that $|h_1(y)|\lesssim \delta y e^{-y/\sqrt 2}$, that 
\[\sup_{0<y<\infty} \left(\int_{0}^\infty  |\partial_w G_1(y,w)b(w) + (b'(w) + \lambda - \frac 3 2 - d(w)) G_1(y,w)| \dd w\right) < 1,\]
and therefore $V_\lambda$ solving \eqref{e:Vintegral} exists uniquely, and that $|V_\lambda(y)|+|V_\lambda'(y)|\lesssim \delta e^{-y/\sqrt 2}$.
%We verify that $h_1$ decays exponentially as $y\rightarrow\infty$:
%\begin{align*}
%h_1(y) &= Y_1(y)\int_0^y Z_1(w) [b(w)Y_1'(w) + (\lambda - \frac 3 2 - d(w))Y_1(w)]\dd w \\
%&\quad + Z_1(y)\int_y^\infty Y_1(w)[b(w)Y_1'(w) + (\lambda - \frac 3 2 - d(w))Y_1(w)]\dd w.
%\end{align*}
%As before, we have $|h_1(y)|\lesssim \delta y e^{-y/\sqrt 2}$. By Lemma \ref{l:Fredholm}, we can solve \eqref{e:Vlambda} because

%for $\delta$ sufficiently small, and $\|V_\lambda\|_{L^\infty} \lesssim \|h_1\|_{L^\infty} \lesssim \delta$. (We can check directly that $\int_0^\infty |G_1(y,w)|\dd w \leq C$ independently of $y$, as above.)

We need $V_\lambda(0)=0$ to extend $V_\lambda$ by oddness. Note that
\begin{align*}
V_\lambda(0) &= \int_0^\infty Y_1\left[b(Y_1'+V_\lambda')+\left(\lambda-\frac 3 2 - d\right)(Y_1 + V_\lambda)\right]\dd w\\
& = \left(\lambda-\frac 3 2\right)\int_0^\infty Y_1(Y_1+V_\lambda)\dd w - \int_0^\infty [(b'+d)Y_1 - bY_1'](Y_1+V_\lambda)\dd w
\end{align*}
since $Z_1(0) = 1$. We choose
\[\lambda^\ast  = \max\left( C_0\delta,  5\int_0^\infty \left|(b'+d)Y_1 - bY_1'\right|\dd w \right)\]
It is straightforward to check that $\|Y_1+V_\lambda\|_{L^\infty}\leq 1$, so this choice of $\lambda^\ast$ ensures $V_{3/2+\lambda^\ast}(0)>0$, $V_{3/2-\lambda^*}(0)<0$, and $|\lambda^\ast|\lesssim \delta$. Given $\lambda,\mu\in[\frac 3 2 - \lambda^\ast,\frac 3 2 + \lambda^\ast]$, we can show by arguments similar to above that $\|V_{\lambda} - V_{\mu}\|_{L^\infty} \lesssim |\lambda-\mu|$, that $|V_{\lambda}(0) - V_{\mu}(0)|\lesssim |\lambda - \mu|$, and that $V_{\lambda}(0) > V_{\mu}(0)$ if $\lambda >\mu$. We conclude there is a unique $\lambda_1\in [\frac 3 2-\lambda^*,\frac 3 2+\lambda^*]$ such that $V_{\lambda_1}(0) = 0$. Extending $V_{\lambda_1}$ by oddness, there is an odd, exponenentially decaying eigenfunction $\bar Y_1 = Y_1 + V_{\lambda_1}$ corresponding to $\lambda_1$.

For $\delta$ sufficiently small, the interval $[2-C_0\delta,2)$ contains at most one eigenvalue of $\mathcal L_K$. By general Sturm-Liouville theory, all eigenvalues of $\mathcal L_K$ are simple (indeed, one may compute directly that the Wronskian of two eigenfunctions is zero) and the parity of the eigenfunctions must alternate (because the eigenvalues of $\mathcal L_K$ on $[0,\infty)$ with Dirichlet and Neumann boundary conditions at $0$ must interlace). We conclude that $\bar Y_1$ is the only odd eigenfunction corresponding to the discrete spectrum of $\mathcal L_K$.
%The eigenvalues $\lambda_0$ and $\lambda_1$ are simple, as can be seen from the usual Sturm-Liouville proof: for two eigenfunctions $Y$ and $Z$ corresponding to $\lambda_i$, we compute directly that $W'(y) = \dfrac d{dy}(\exp(\int_0^y b(s)\dd s)(YZ'-Y'Z)) = 0$. Since $W(y)$ must approach $0$ as $y\rightarrow\infty$, it is identically zero, and we conclude $Y=\beta Z$ for some $\beta\in\R$.
\end{proof}

\section{Orbital stability and spectral decomposition}\label{s:orbit}

To prove the orbital stability with respect to odd perturbations $\vp$ solving \eqref{e:perturb}, we follow the outline of the simple proof given in \cite{KMM} for odd perturbations in the constant-speed case. Note that we cannot apply the stability result in \cite{HPW} directly because of the first-order term in our equation.

By direct computation, we check that \eqref{e:perturb} implies the following energy conservation for $\vp(t)$: if $\vp(0) = \vp^{in}$, then
\begin{equation}\label{e:conserve}
\mathcal E(\vp(t)) := \int p \vp_2^2(t) + \langle \mathcal L_K \vp_1(t),\vp(t)\rangle_p + 2 \int p K\vp_1^3(t) + \frac 1 2 \int p \vp_1^4(t) = \mathcal E(\vp^{in}),
\end{equation}
for all $t$ such that $\vp(t)$ exists in the energy space.

Next, we prove the following:
\begin{lemma}\label{l:orbit}
If $\delta>0$ is sufficiently small, then there exists $c_0>0$ such that
\[\langle \mathcal L_K \vp_1,\vp_1\rangle_p \geq c_0\|\vp_1\|_{H^1},\]
for all odd functions $\vp_1\in H^1(\R)$.
\end{lemma}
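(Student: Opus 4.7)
The plan is to reduce the coercivity of $\mathcal L_K$ with respect to $\langle\cdot,\cdot\rangle_p$ to the known coercivity of $\mathcal L$ on odd functions, and then absorb the $O(\delta)$ corrections coming from $b$ and $d$. The only inputs needed are the decay bound \eqref{e:decay}, the pointwise estimate on $H_\delta$ from Theorem \ref{t:exist}, and the spectrum of $\mathcal L$ recalled in the introduction.

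First, I would rewrite the quadratic form as a Dirichlet-type integral. Since $p'=pb$, an integration by parts gives
\[
\int p\,\vp_1\bigl(-\partial_y^2\vp_1 - b\,\partial_y\vp_1\bigr)\dd y = \int p\,(\partial_y\vp_1)^2\dd y,
\]
so that
\[
\langle \mathcal L_K \vp_1,\vp_1\rangle_p = \int p\bigl[(\partial_y\vp_1)^2 + (-1+3K^2)\vp_1^2\bigr]\dd y.
\]
Second, I would split this as $\langle \mathcal L_K\vp_1,\vp_1\rangle_p = \langle \mathcal L\vp_1,\vp_1\rangle + R$, where
\[
R = \int (p-1)\bigl[(\partial_y\vp_1)^2+(-1+3H^2)\vp_1^2\bigr]\dd y + \int p\,d(y)\,\vp_1^2\dd y.
\]
From \eqref{e:decay} and the definition of $p$ one has $\|p-1\|_{L^\infty}\lesssim \delta$, while Theorem \ref{t:exist} gives $|d(y)|=|3H_\delta^2+6HH_\delta|\lesssim \delta e^{-\sqrt 2|y|}$, and $(-1+3H^2)$ is bounded; consequently $|R|\lesssim \delta\|\vp_1\|_{H^1}^2$.

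Third, I would exploit the oddness of $\vp_1$ to control the main term. Since $Y_0$ is even, odd $\vp_1$ are $L^2$-orthogonal to $Y_0$; as $\sigma(\mathcal L)=\{0,\tfrac 3 2\}\cup[2,\infty)$, the spectral theorem gives $\langle\mathcal L\vp_1,\vp_1\rangle \geq \tfrac 3 2\|\vp_1\|_{L^2}^2$. Upgrading this to $H^1$ coercivity is routine: writing
\[
\|\partial_y\vp_1\|_{L^2}^2 = \langle\mathcal L\vp_1,\vp_1\rangle + \int(1-3H^2)\vp_1^2\dd y \leq \langle\mathcal L\vp_1,\vp_1\rangle + \|\vp_1\|_{L^2}^2
\]
and combining with the $L^2$ estimate yields $\langle\mathcal L\vp_1,\vp_1\rangle \geq c_1\|\vp_1\|_{H^1}^2$ for an absolute constant $c_1>0$.

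Putting these together,
\[
\langle \mathcal L_K\vp_1,\vp_1\rangle_p \geq c_1\|\vp_1\|_{H^1}^2 - C\delta\|\vp_1\|_{H^1}^2 \geq c_0\|\vp_1\|_{H^1}^2,
\]
provided $\delta$ is small enough. There is no serious obstacle: the statement is a perturbation of the classical odd-coercivity of $\mathcal L$, and the only care needed is to verify that every piece of the remainder $R$ is genuinely $O(\delta)$, which follows from the exponential decay in \eqref{e:decay} and the pointwise bound on $H_\delta$.
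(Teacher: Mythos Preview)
Your proof is correct, and it takes a genuinely different route from the paper's. The paper argues directly with $\mathcal L_K$: it invokes Theorem~\ref{t:spectrum} to get $\langle \mathcal L_K\vp_1,\vp_1\rangle_p\geq \lambda_1\|\vp_1\|_{L^2_p}^2$ on odd functions (since $\lambda_1\geq \tfrac32-C\delta$ is the bottom of the odd spectrum of $\mathcal L_K$), then writes $\langle \mathcal L_K\vp_1,\vp_1\rangle_p = \int p(\partial_y\vp_1)^2 + 2\int p\vp_1^2 - 3\int p(1-K^2)\vp_1^2$, uses $1-K^2\leq 1$, and takes a convex combination of these two inequalities to upgrade to $H^1$. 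You instead reduce everything to the unperturbed operator $\mathcal L$: you write $\langle\mathcal L_K\vp_1,\vp_1\rangle_p=\langle\mathcal L\vp_1,\vp_1\rangle+R$ with $|R|\lesssim\delta\|\vp_1\|_{H^1}^2$, use the known spectrum of $\mathcal L$ on odd functions, and upgrade that to $H^1$. Your approach has the advantage that it does not rely on the spectral analysis of $\mathcal L_K$ carried out in Section~\ref{s:spec}, so it is logically independent of Theorem~\ref{t:spectrum}; the paper's approach has the virtue of exploiting machinery already built. Both the integration-by-parts identity and the $H^1$ upgrade step in your argument are clean, and the $O(\delta)$ bound on $R$ is exactly as you say.
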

\begin{proof}
By the spectral properties of $\mathcal L_K$ and the oddness of $\vp_1$, we have 
\[\langle \mathcal L_K \vp_1,\vp_1\rangle_p \geq  \lambda_1 \|\vp_1\|_{L^2},\]
where $\lambda_1\geq \frac 3 2 -C\delta$. Next, since $(1-K^2) \leq 1$,
\begin{align*}
\langle \mathcal L_K \vp_1,\vp_1\rangle_p &= \int p (\partial_y \vp_1)^2 + 2\int p (\vp_1)^2 - 3\int p (1-K^2)(\vp_1)^2\\
&\geq \int p (\partial_y \vp_1)^2 + \frac 5 7 \int p (\vp_1)^2 - \frac {12} 7\int p (1-K^2)(\vp_1)^2.
\end{align*}
Taking $\frac 4 7$ times the first equality and subtracting it from the second line, we have
\[\langle \mathcal L_K \vp_1,\vp_1\rangle_p \geq \frac 3 7 \int p (\partial_y \vp_1)^2  - \frac 3 7 \int p (\vp_1)^2 + \frac 4 7 \langle \mathcal L_K \vp_1,\vp_1\rangle_p \geq c_0 \|\vp_1\|_{H^1},\]
since $|p(y)| \geq 1-C\delta$.
\end{proof}
With this lemma, we can prove the orbital stability of $K$ with respect to odd perturbations:
\begin{proposition}\label{p:orbit}
For $\delta$ sufficiently small, there exist $C>0$ and $\eps_0>0$, depending on $\delta$, such that for any $\eps\in (0,\eps_0)$ and any $\vp^{in}\in H^1\times L^2$ with $\|\vp^{in}\|_{H^1\times L^2} < \eps$, the solution $\vp$ to \eqref{e:perturb} with $b$ satisfying \eqref{e:decay} and with initial data $\vp(0) = \vp^{in}$ exists in $H^1\times L^2$ for all $t\in\R$ and satisfies
\[\forall t\in\R, \quad \|\vp(t)\|_{H^1\times L^2} < C\|\vp^{in}\|_{H^1\times L^2}.\]
\end{proposition}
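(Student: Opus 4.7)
The plan is a classical Lyapunov/bootstrap argument: use the conserved energy $\mathcal E(\vp(t))$ from \eqref{e:conserve} as a comparison functional for $\|\vp(t)\|_{H^1\times L^2}^2$, with the quadratic part controlled from below by Lemma \ref{l:orbit} and the cubic/quartic remainder absorbed through the one-dimensional Sobolev embedding $H^1(\R)\hookrightarrow L^\infty(\R)$. Note first that, since $b$ and $K$ are odd, the system \eqref{e:perturb} preserves oddness, so $\vp(t)$ remains odd on its interval of existence and Lemma \ref{l:orbit} applies at every time.

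First I would bound the initial energy. Using $\|p\|_{L^\infty}\leq 1+C\delta$, $\|K\|_{L^\infty}\lesssim 1$, and $\|b\|_{L^\infty}\lesssim\delta$, the quadratic part of $\mathcal E(\vp^{in})$ is bounded by a constant times $\|\vp^{in}\|_{H^1\times L^2}^2$, while the nonlinear part obeys
\[
\Big|2\!\int p K(\vp_1^{in})^3 + \tfrac 1 2\!\int p (\vp_1^{in})^4\Big| \lesssim \|\vp_1^{in}\|_{L^\infty}\|\vp_1^{in}\|_{L^2}^2 + \|\vp_1^{in}\|_{L^\infty}^2\|\vp_1^{in}\|_{L^2}^2 \lesssim \eps^3,
\]
by Sobolev embedding. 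Hence $|\mathcal E(\vp^{in})|\leq C_0\eps^2$ for $\eps$ small.

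Second, I would derive a matching lower bound for $\mathcal E(\vp(t))$ on any interval of existence. Lemma \ref{l:orbit} yields $\langle\mathcal L_K\vp_1(t),\vp_1(t)\rangle_p\geq c_0\|\vp_1(t)\|_{H^1}^2$, and $\int p\vp_2^2(t)\geq(1-C\delta)\|\vp_2(t)\|_{L^2}^2$, so by the same cubic/quartic estimate as above,
\[
\mathcal E(\vp(t)) \geq c_1\|\vp(t)\|_{H^1\times L^2}^2 - C\|\vp_1(t)\|_{H^1}^3,
\]
with $c_1>0$ uniform in small $\delta$.

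Third, I would close the argument by bootstrap. Standard semilinear theory (the nonlinearity $3K\vp_1^2+\vp_1^3$ is locally Lipschitz in $H^1$ by Sobolev embedding, and the linear part is a bounded perturbation of a self-adjoint generator on $H^1\times L^2$) yields a maximal existence time $T^\ast\in(0,\infty]$ with blow-up alternative $\|\vp(t)\|_{H^1\times L^2}\to\infty$ as $t\to T^\ast$ whenever $T^\ast<\infty$. Setting $C=2\sqrt{C_0/c_1}$, if the continuous function $t\mapsto\|\vp(t)\|_{H^1\times L^2}$ first reached the value $C\eps$ at some $t_0<T^\ast$, then combining the two bounds at $t_0$ would give
\[
c_1 C^2\eps^2 \leq C_0\eps^2 + C'\eps\cdot C^2\eps^2,
\]
contradicting the choice of $C$ once $\eps$ is small enough in terms of $\delta$. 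Thus $\|\vp(t)\|_{H^1\times L^2}<C\eps$ on $[0,T^\ast)$, forcing $T^\ast=+\infty$; the $t<0$ direction is identical. No step is a serious obstacle: the only real care is tracking weighted versus unweighted norms through $\|p-1\|_{L^\infty}\lesssim\delta$, and verifying that $c_0,c_1>0$ remain uniform in small $\delta$ via Lemma \ref{l:orbit} and the spectral gap from Theorem \ref{t:spectrum}.
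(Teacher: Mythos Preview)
Your proposal is correct and follows essentially the same approach as the paper: bound $\mathcal E(\vp^{in})$ from above by a constant times $\|\vp^{in}\|_{H^1\times L^2}^2$ plus cubic error, bound $\mathcal E(\vp(t))$ from below via Lemma~\ref{l:orbit}, and invoke energy conservation \eqref{e:conserve}. Your write-up is in fact more complete than the paper's, which leaves the continuity/bootstrap step and the preservation of oddness implicit.
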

\begin{proof}
By straightforward estimates, 
\[\mathcal E(\vp^{in}) \leq (1+C_1\delta) \left(\|\vp_2^{in}\|_{L^2}^2 + 2\|\vp_1^{in}\|_{H^1}^2\right) + O(\|\vp_1^{in}\|_{H^1}^3),\]
and by Lemma \ref{l:orbit},
\[\mathcal E(\vp(t)) \geq c_0\left(\|\vp_2(t)\|_{L^2}^2 + \|\vp_1(t)\|_{H^1}^2\right)  - O(\|\vp_1(t)\|_{H^1}^3).\]
But  $\mathcal E(\vp(t))= \mathcal  E(\vp^{in})$, by \eqref{e:conserve}.
\end{proof}

Next, we decompose our solution $\vp$ based on the spectrum of $\mathcal L_K$. In the constant-speed case, one has $K=H$, and our decomposition will reduce to the one in \cite{KMM}. Let $\bar Y_1$ be the eigenfunction satisfying $\mathcal L_K \bar Y_1 = \mu^2 \bar Y_1$, with $\mu = \sqrt{\lambda_1}$. We decompose the solution $\vp$ to \eqref{e:perturb} as follows: Define
\[z_1(t) := \langle \vp_1(t),\bar Y_1\rangle_p, \quad z_2(t) := \frac 1 \mu \langle \vp_2(t),\bar Y_1\rangle_p,\]
\[u_1(t) := \vp_1(t) - z_1(t) \bar Y_1, \quad u_2(t) := \vp_2(t) - \mu z_2(t)\bar Y_1.\]
We have $\langle u_1(t),\bar Y_1\rangle_p = \langle u_2(t),\bar Y_1\rangle_p = 0$ for all $t\in \R$. Set $z(t) := (z_1(t),z_2(t))$ and $u(t) := (u_1(t),u_2(t))$. Finally, define
\[|z|^2(t) := z_1^2(t) + z_2^2(t), \quad \alpha(t) := z_1^2(t) - z_2^2(t), \quad \beta(t) := 2z_1(t)z_2(t).\]
By \eqref{e:perturb}, we have 
\begin{equation}\label{e:z}
\begin{cases} \dot z_1 &= \mu z_2\\
\dot z_2 &= -\mu z_1  -\dfrac 1 \mu \langle 3K\vp_1^2+\vp_1^3,\bar Y_1\rangle_p.\end{cases}
\end{equation}
We have 
\begin{equation}\label{e:alphabeta}
\begin{cases} \dot \alpha = 2\mu \beta + F_\alpha,\\
\dot \beta = -2\mu \alpha + F_\beta,\end{cases}
\end{equation}
with 
\begin{align*}
F_\alpha &= \dfrac 2 \mu z_2\langle 3K\vp_1^2 + \vp_1^3,\bar Y_1\rangle_p,\\
F_\beta &=  -\dfrac 2 \mu z_1\langle 3K\vp_1^2 + \vp_1^3,\bar Y_1\rangle_p,
\end{align*}
and 
\[\frac d {dt}(|z|^2) = -F_\alpha.\]
Next, \eqref{e:perturb} implies that $u(t)$ satisfies
\begin{equation}
\begin{cases} \dot u_1 &= u_2\\
\dot u_1 &= -\mathcal L_K u_1 - 2z_1^2 \bar f + F_u, \end{cases}
\end{equation}
where 
\[F_u = -\left[ 3K(u_1^2+2u_1z_1\bar Y_1) + \vp_1^3 - \langle 3K(u_1^2 + 2u_1z_1 \bar Y_1) + \vp_1^3,\bar Y_1\rangle_p \bar Y_1\right],\]
and $\bar f = \lambda_1 (K\bar Y_1^2 - \langle K\bar Y_1^2,\bar Y_1\rangle_p \bar Y_1)$ is an odd Schwartz function satisfying $\langle \bar f, \bar Y_1\rangle_p = 0$. Since $\bar Y_1$ and $\bar Y_1'$ decay at the rate $e^{-|y|/\sqrt 2}$, $\bar f$ and $\bar f'$ have the same decay, i.e. $|\bar f|+|\bar f'|\lesssim e^{-|y|/\sqrt 2}$ as $y\rightarrow\infty$.

It will be useful to replace the term $z_1^2\bar f$ with a term involving only $\alpha$. Let $q$ be the odd solution to $\mathcal L_K q = f$. Using the methods of Sections \ref{s:stat} and \ref{s:spec}, it is straightforward to show that $q$ exists uniquely in $H^1(\R)$ and satisfies $|q(y)| + |q'(y)|\lesssim e^{-y/\sqrt 2}$. We make the change of unknown
\begin{align*}
v_1(t,y) &:= u_1(t,y) + |z|^2(t)q(y),\\
v_2(t,y) &:= u_2(t,y)
\end{align*}
Now the system becomes
\begin{equation}\label{e:v}
\begin{cases}\dot v_1 &= v_2 + F_1,\\
\dot v_2 &= -\mathcal L_K v_1 - \alpha \bar f + F_2,\end{cases}
\end{equation}
where $F_1 = -qF_\alpha$ and $F_2 = F_u$. We have $0 = \langle \bar f, \bar Y_1\rangle_p = \langle \mathcal L_K q,\bar Y_1\rangle_p = \langle q,\mathcal L_K \bar Y_1\rangle_p = \mu^2\langle q,\bar Y_1\rangle_p$, which implies $\langle v_1,\bar Y_1\rangle_p = \langle v_2,\bar Y_1\rangle_p = 0$. 

The terms $F_\alpha$, $F_\beta$, $F_1$, and $F_2$ are regarded as error terms, and will be dealt with in Section \ref{s:conclusion}.
\section{Virial arguments}\label{s:vir}

Here we analyze the system in $(v_1,v_2,\alpha,\beta)$ given by \eqref{e:v} and \eqref{e:alphabeta}.
%\begin{equation}
%\begin{cases} \dot v_1 &= v_2 + F_1,\\
%\dot v_2 &= -\mathcal L_K v_1 - \alpha \bar f + F_2,\\
%\dot \alpha &= 2\mu\beta + F_\alpha,\\
%\dot \beta &= 2\mu\alpha + F_\beta.\end{cases}
%\end{equation}
Following \cite{KMM}, we define
\[\mathcal I := \int \psi(\partial_y v_1)v_2 + \frac 1 2 \int \psi'v_1v_2,\]
with $\psi = 8 \sqrt 2 \tanh(y/8\sqrt 2)$ and 
\[\mathcal J := \alpha \int v_2 \bar g - 2\mu \beta \int v_1 \bar g,\]
with $\bar g$ to be chosen later. Differentiating and using the system \eqref{e:v} for $v_1$ and $v_2$, we have
\begin{align*}
\frac d {dt} \int \psi(\partial_y v_1)v_2 &= \int \psi(\partial_y \dot v_1)v_2 + \int \psi(\partial_y v_1) \dot v_2\\
&= \int \psi(\partial_y v_2)v_2 + \int \psi (\partial_y v_1)(\partial_y^2 v_1 + b\partial_yv_1 - 2v_1  + 3(1-K^2)v_1)\\
&\quad - \alpha \int \psi (\partial_y v_1)\bar f + \int \psi((\partial_y F_1)v_2 + (\partial_y v_1)F_2)\\
&= -\frac 1 2 \int \psi' (v_2^2 + (\partial_y v_1)^2 - 2v_1^2) + \int \psi b(\partial_y v_1)^2 - \frac 3 2 \int (\psi(1-K^2))'v_1^2\\
&\quad + \alpha \int v_1(\psi \bar f)' + \int \psi((\partial_y F_1)v_2 + (\partial_y v_1)F_2),
\end{align*}
and
\begin{align}\label{e:psiprime}
\frac d {dt} \int \psi' v_1v_2 &= \int \psi' \dot v_1 v_2 + \int \psi' v_1 \dot v_2\nonumber\\
&= \int \psi'v_2^2 + \int \psi'v_1(\partial_y^2 + b\partial_yv_1 - 2v_1 + 3(1-K^2)v_1)\nonumber\\
&\quad  - \alpha \int \psi'v_1\bar f + \int \psi'(F_1v_2 + v_1 F_2)\nonumber\\
&= \int \psi'(v_2^2 - (\partial_y v_1)^2 - 2v_1^2) + \frac 1 2 \int \psi'''v_1^2 - \frac 1 2 \int (\psi'b)'v_1^2 \nonumber\\
&\quad + 3\int \psi'(1-K^2)v_1^2- \alpha \int \psi' v_1 \bar f + \int \psi'(F_1v_2 + v_1 F_2),
\end{align}
which leads to
\begin{align*}
\frac d {dt} \mathcal I &= -\tilde{\mathcal B}(v_1) + \alpha \int v_1 (\psi \bar f' + \frac 1 2 \psi' \bar f) + \int \psi b (\partial_y v_1)^2 - \frac 1 4 \int (\psi'b)'v_1^2\\
&\quad + \int v_2 (\psi \partial_y F_1 + \frac 1 2 \psi'F_1) - \int v_1(\psi \partial_y F_2 + \frac 1 2 \psi'F_2)
\end{align*}
where
\[\tilde{\mathcal B}(v_1) := \int \psi'(\partial_y v_1)^2 - \frac 1 4 \int \psi'''v_1^2 - 3 \int \psi K K'v_1^2.\]
Differentiating $\mathcal J$, we have
\begin{align*}
\frac d{dt}\mathcal J &= \dot \alpha \int v_2 \bar g + \alpha\int \dot v_2 \bar g - 2\mu \dot \beta\int v_2 \bar g - 2\mu \beta \int \bar g\dot v_1\\
&= \alpha\int \bar g (-\mathcal L_K v_1  + 4\mu^2) - \alpha^2 \int \bar f \bar g \\
&\quad + F_\alpha \int v_2 \bar g - 2\mu F_\beta \int v_1 \bar g - 2\mu \beta \int \bar g F_1 + \alpha \int \bar g F_2.
\end{align*}
Note that
\[\int \bar g(-\mathcal L_K v_1 + 4\mu^2 v_1) = \int p \, \frac {\bar g} p \left(-\mathcal L_K v_1 + 4\mu^2 v_1\right) = \int p v_1 \left(-\mathcal L_k \left(\frac {\bar g} p\right) + 4\mu^2 \frac {\bar g} p \right), \]
so combining these calculations, we obtain
\[\frac d {dt}(\mathcal I + \mathcal J) = -\tilde{\mathcal D}(v_1,\alpha) + \mathcal R_{\tilde{\mathcal D}},\]
where
\begin{align}\label{e:D}
\tilde{\mathcal D}(v_1,\alpha) &:= \tilde{\mathcal B}(v_1) - \int \psi b (\partial_y v_1)^2 + \frac 1 4 \int (\psi'b)'v_1^2\\
&\quad - \alpha\int p v_1\left(\frac{\psi \bar f' + \frac 1 2 \psi' \bar f} p - \mathcal L_K \left( \frac {\bar g} p\right) + 4\mu^2 \frac {\bar g} p\right) + \alpha^2\int \bar f \bar g,\nonumber
\end{align}
and
\begin{equation}\label{e:remainder}
\mathcal R_{\tilde{\mathcal D}} := \int \bar g(\alpha F_2 - 2\mu\beta F_1) + \int v_2(\psi \partial_y F_1 + \frac 1 2 \psi' F_1 + \bar gF_\alpha) - \int v_1(\psi \partial_y F_2 + \frac 1 2 \psi' F_2 + 2\mu \bar g F_\beta).
\end{equation}

We will choose $\bar g$ in order to simplify $\tilde{\mathcal D}$ considerably.  In \cite{KMM}, the authors chose $g$ in the functional $\mathcal J$ by solving the equation 
\begin{equation}\label{e:Lminus6}
(\mathcal L - 6)g = \psi f' + \left(a+\frac 1 2\right) \psi' f,
\end{equation}
where $f = \frac 3 2 (H Y_1^2 - \langle H Y_1^2, Y_1\rangle Y_1)$, and the constant 
\begin{equation}\label{e:a}
a:= -\dfrac {\langle \psi f' + \frac 1 2 \psi' f, \mbox{Im}(k)\rangle}{\langle \psi' f, \mbox{Im}(k)\rangle} \approx 0.687271,
\end{equation}
where $k$ is the function defined by \eqref{e:kdef} below. The value of $a$ was found numerically. We quote a lemma from \cite{KMM} that allows one to solve \eqref{e:Lminus6}. The form of the function $k$ in Lemma \ref{l:k} was originally found by Segur \cite{S}.
\begin{lemma}[{\cite[Lemma 3.1]{KMM}}]\label{l:k}
\textup{(a)} Let $F\in L^1(\R)\cap C^1(\R)$ be a real-valued function. The function $G\in L^\infty(\R)\cap C^2(\R)$ defined by
\[G(y) = \frac 1 {12} \mbox{\textup{Im}}\left(k(y) \int_{-\infty}^y \bar k F + \bar k(y)\int_y^\infty kF\right),\]
where
\begin{equation}\label{e:kdef}
k(y) = e^{2iy}\left(1 + \frac 1 2 \sech ^2\left(\frac y{\sqrt 2}\right)+ i\sqrt 2 \tanh\left(\frac y{\sqrt 2}\right)\right),
\end{equation}
and $\bar k$ is the complex conjugate of $k$, satisfies
\[(-\mathcal L + 6)G = F.\]
\textup{(b)} Assume in addition that $F\in \mathcal S(\R)$, the class of Schwartz functions. Then,
\[G\in \mathcal S(\R) \quad \Longleftrightarrow \quad \langle k,F\rangle = 0.\]
\end{lemma}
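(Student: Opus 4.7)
My plan is to treat this as a Sturm-Liouville problem solved by variation of parameters using the complex conjugate pair $\{k, \bar k\}$. For part (a), the first step is to verify by direct computation that $k$ solves the homogeneous equation $(-\mathcal L + 6)k = k'' + (7 - 3H^2)k = 0$. Writing $k = e^{2iy}P(y)$ with $P = 1 + \tfrac 1 2 \sech^2(y/\sqrt 2) + i\sqrt 2 \tanh(y/\sqrt 2)$, this reduces after cancelling $e^{2iy}$ to the identity $P'' + 4iP' + 3\sech^2(y/\sqrt 2)\,P = 0$, which follows from routine manipulation using the standard $\sech$/$\tanh$ derivative identities together with $\sech^2 + \tanh^2 = 1$.

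Next, since the operator $-\partial_y^2 + (3H^2 - 7)$ has no first-order term, the Wronskian $W := k\bar k' - k'\bar k$ is constant; evaluating it using the asymptotics $k(y) \sim (1+i\sqrt 2)e^{2iy}$ as $y\to +\infty$ gives $W = -12i$. I would then set
\[
\phi(y) := k(y)\int_{-\infty}^y \bar kF + \bar k(y)\int_y^\infty kF,
\]
and differentiate. Because $F$ is real the diagonal contribution $k\bar k F - \bar k k F$ cancels in $\phi'$, leaving $\phi' = k'\int_{-\infty}^y\bar k F + \bar k'\int_y^\infty kF$; a second differentiation then yields $(-\mathcal L + 6)\phi = (k'\bar k - \bar k'k)F = 12iF$. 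Taking imaginary parts (noting $\bar\phi$ solves the conjugate equation $(-\mathcal L + 6)\bar\phi = -12iF$) shows $G = \tfrac{1}{12}\mbox{Im}(\phi)$ satisfies $(-\mathcal L + 6)G = F$. Boundedness of $G$ and of its first two derivatives follows from the uniform boundedness of $k, k', k''$ on $\R$ together with $F \in L^1\cap C^1$.

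For part (b) the argument is driven by the asymptotic behavior of the two tail integrals. As $y\to +\infty$ with $F\in\mathcal S$, $\int_y^\infty kF$ decays rapidly, so $\phi(y) = k(y)\,\overline{\langle k, F\rangle} + o(1)$. The asymptotic profile of $k$ is $(1+i\sqrt 2)e^{2iy}$, whose real and imaginary parts are linearly independent bounded oscillations, so $\mbox{Im}(k(y)c)\to 0$ forces $c=0$; this gives $\langle k, F\rangle = 0$ as a necessary condition, and the symmetric analysis at $-\infty$ gives the same condition. Conversely, assuming this identity, I would use $\int_\R \bar k F = 0$ to rewrite $k(y)\int_{-\infty}^y \bar k F = -k(y)\int_y^\infty \bar k F$, so that for $y\to +\infty$ the entire expression for $\phi$ reduces to tail integrals over $[y,\infty)$ multiplied by bounded functions; these inherit Schwartz decay from $F$. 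A symmetric rewriting (using $\int_\R kF = 0$) handles $y\to -\infty$. Differentiating the resulting expressions produces the same structure with $k^{(j)}$ (bounded on $\R$) in place of $k$, so all derivatives of $G$ also decay rapidly.

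The main obstacle is the bookkeeping in part (b): one must exploit the cancellation $\langle k, F\rangle = 0$ to rewrite $\phi$ using the correct tail for each asymptotic direction, and verify that this rewriting commutes with differentiation so that every derivative of $G$ inherits Schwartz decay from $F$. The verification in part (a) that $k$ solves the homogeneous equation is routine, though tedious.
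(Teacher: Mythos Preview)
The paper does not prove this lemma; it is quoted verbatim from \cite{KMM} and used as a black box. Your argument is the standard variation-of-parameters proof using the Jost solutions $k,\bar k$ for $-\mathcal L+6$, and it is correct in all essentials: the verification that $k$ solves the homogeneous equation, the Wronskian computation $W=-12i$, the cancellation in $\phi'$, and the asymptotic analysis in part (b) all go through as you describe. One cosmetic remark: the cancellation $k\bar kF-\bar kkF=0$ in $\phi'$ holds by commutativity regardless of whether $F$ is real; the realness of $F$ is only needed when you pass from $(-\mathcal L+6)\phi=12iF$ to $(-\mathcal L+6)G=F$ by taking imaginary parts.
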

Since $k(-y) = \bar k(y)$, if $F$ is odd then $G$ is odd as well, and the orthogonality condition in Lemma \ref{l:k}(b) reduces to $\langle \mbox{Im } k,F\rangle = 0$.

In our case, we would like to find $\bar h$ solving
\begin{equation}\label{e:LKminus}
\mathcal L_K \bar h - 4\mu^2 \bar h = \dfrac 1 p (\psi \bar f' + (a_0 + \frac 1 2) \psi' \bar f),
\end{equation}
and let $\bar g = p \bar h$. The constant $a_0$ is defined by
\[a_0 := -\dfrac {\langle (\psi \bar f' + \frac 1 2 \psi' \bar f)/p, \mbox{Im}(k)\rangle}{\langle \psi'\bar f/p, \mbox{Im}(k)\rangle}.\]% = \dfrac {\langle \psi \bar f' + \frac 1 2 \psi' \bar f, \mbox{Im}(k)\rangle}{\langle \psi'\bar f, \mbox{Im}(k)\rangle}.\] 
From \cite{KMM}, we have $\langle \psi' f, \mbox{Im}(k)\rangle \approx -0.327$. From our Theorem \ref{t:spectrum}, we have 
\[|\bar f(y) - f(y)| \lesssim \delta e^{-|y|/\sqrt 2},\quad |\bar f'(y) - f'(y)|\lesssim \delta e^{-|y|/\sqrt 2},\]
which implies $\langle \psi'\bar f/p, \mbox{Im}(k)\rangle < -0.3$ for $\delta$ sufficiently small. (Recall $\|p-1\|_{L^\infty}\lesssim \delta$.) We also claim that $|a-a_0| \lesssim \delta$, where $a$ is defined by \eqref{e:a}. Indeed, we have
\[a-a_0 = \frac{a \langle \psi' (\bar f /p - f), \mbox{Im}(k)\rangle  + \langle (\psi (\bar f'/p - f') + \frac 1 2 \psi' (\bar f/p - f),\mbox{Im}(k)\rangle}{\langle \psi' \bar f /p, \mbox{Im}(k)\rangle}, \]
so that $|a-a_0|\leq C(\|\bar f / p - f\|_{L^\infty} + \|\bar f'/p - f\|) \lesssim \delta$. We conclude $a_0 > 0$ for $\delta$ small enough. This allows us to solve \eqref{e:LKminus}:
\begin{lemma}\label{l:barh}
There exists an odd $\bar h \in \mathcal S(\R)$ solving \eqref{e:LKminus}. Furthermore, $\bar h$ satisfies
\[|\bar h(y)|+|\bar h'(y)|\lesssim e^{-|y|/\sqrt 2},\]
and $\|g-\bar h\|_{L^\infty}\lesssim \delta$, where $g$ is the unique solution of \eqref{e:Lminus6}.
\end{lemma}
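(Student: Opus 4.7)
The plan is to construct $\bar h$ perturbatively from the known solution $g$ of \eqref{e:Lminus6}. Decompose
\[\mathcal L_K - 4\mu^2 = (\mathcal L - 6) + Q, \qquad Q := -b(y)\partial_y + d(y) + (6 - 4\mu^2),\]
so that, by \eqref{e:decay} and Theorem \ref{t:spectrum}, every piece of $Q$ is of size $O(\delta)$: $b$ and $d$ are exponentially localized and $6 - 4\mu^2 = 6 - 4\lambda_1$ is a small constant. Writing $\bar h = g + w$ and using $(\mathcal L - 6)g = \psi f' + (a + \tfrac 1 2)\psi' f$, the unknown $w$ satisfies
\[(\mathcal L_K - 4\mu^2)w = R_0 - Qw, \qquad R_0 := F - (\psi f' + (a + \tfrac 1 2)\psi' f) - Qg.\]
Using $\|\bar f - f\|_{L^\infty} + \|\bar f' - f'\|_{L^\infty}\lesssim \delta$, $\|p - 1\|_{L^\infty}\lesssim \delta$ (Theorem \ref{t:spectrum}), $|a - a_0|\lesssim \delta$ (derived just above the lemma), and the Schwartz decay of $g$, I would show $\|R_0\|_{L^\infty}\lesssim \delta$, with $R_0$ decaying at rate $e^{-|y|/\sqrt 2}$ inherited from $\bar f$.

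Since $4\mu^2\in\sigma_c(\mathcal L_K)$ by Theorem \ref{t:spectrum}, inverting $\mathcal L_K - 4\mu^2$ on Schwartz functions requires an analog of Lemma \ref{l:k} for the perturbed operator. I would construct a fundamental system of bounded, oscillatory solutions to $(\mathcal L_K - 4\mu^2)u = 0$ close to $k$ and its complex conjugate: setting $\tilde k = k + V$, the perturbation $V$ satisfies an integral equation on the line (in the same spirit as the construction of $V_b$ in Section \ref{s:stat} and $V_\lambda$ in Section \ref{s:spec}) solvable via Lemma \ref{l:Fredholm} for small $\delta$, yielding $\|\tilde k - k\|_{L^\infty} + \|\tilde k' - k'\|_{L^\infty}\lesssim \delta$; the exponential decay of $Q$ at infinity preserves the bounded oscillatory asymptotics of $k$. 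Using $\tilde k$ and its complex conjugate, I would build a Green's function analog of Lemma \ref{l:k}(a) for $-\mathcal L_K + 4\mu^2$.

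For the Schwartz property, the analog of Lemma \ref{l:k}(b) requires orthogonality of the source to $\mathrm{Im}(\tilde k)$ in the $p$-weighted inner product, i.e.\ $\langle R_0 - Qw, \mathrm{Im}(\tilde k)\rangle_p = 0$. The definition of $a_0$ enforces $\langle F, \mathrm{Im}(k)\rangle = 0$, and since $\tilde k - k = O(\delta)$ and $p - 1 = O(\delta)$, this orthogonality holds up to an $O(\delta)$ error in the weighted pairing. To close the argument, I would formulate $w = \mathcal G(R_0 - Qw)$, with $\mathcal G$ the perturbed Green's operator, as a contraction on the ball $\{w \in C(\R) : \|e^{|y|/\sqrt 2}w\|_{L^\infty}\leq C\delta\}$, using a Lyapunov-Schmidt reduction to handle the orthogonality defect: solve the projected equation modulo $\mathrm{Im}(\tilde k)$, then verify that the projection vanishes self-consistently, thanks to the smallness of $Q$ combined with the $O(\delta)$-orthogonality of $R_0$. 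The fixed point $w$ has $\|w\|_{L^\infty}\lesssim \delta$ and the decay $|w(y)| + |w'(y)|\lesssim e^{-|y|/\sqrt 2}$, so $\bar h := g + w$ satisfies all conclusions of the lemma, with Schwartz regularity following by bootstrapping the equation.

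The main obstacle I anticipate is precisely this orthogonality condition: $a_0$ is chosen to match $\mathrm{Im}(k)$, the bounded solution for the unperturbed operator, but Schwartz solvability of the perturbed equation requires orthogonality to $\mathrm{Im}(\tilde k)$ in the $p$-weighted pairing. Reconciling the $O(\delta)$ mismatch through the Lyapunov-Schmidt / fixed-point scheme is the technical heart of the argument, and relies crucially on the exponential localization of $b$ in \eqref{e:decay}, on the $L^\infty$ proximity of $\tilde k$ to $k$, and on the Schwartz decay of $g$ which renders $Qg$ a small, rapidly decaying source.
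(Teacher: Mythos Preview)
Your route differs from the paper's, and the divergence is precisely at the orthogonality step you flag as the main obstacle.

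The paper does not perturb around $g$. Instead it first applies Lemma~\ref{l:k} to the \emph{exact} perturbed right-hand side $\ell = (\psi\bar f' + (a_0 + \tfrac12)\psi'\bar f)/p$, producing an odd Schwartz $h$ with $(\mathcal L - 6)h = \ell$; the constant $a_0$ was defined so that $\langle \ell, \mathrm{Im}\,k\rangle = 0$ holds \emph{exactly}, so no orthogonality defect arises at this stage. For the correction $\eta = \bar h - h$ the paper writes down, in closed form, a fundamental system $k^\circ(y) = e^{i\gamma y}\bigl(1 + \tfrac{c_1}{2}\sech^2(y/\sqrt2) + ic_2\sqrt2\tanh(y/\sqrt2)\bigr)$ for $\mathcal L - 4\mu^2$ (with $\gamma = \sqrt{4\mu^2 - 2}$ and explicit $c_1, c_2$), builds the associated Green's function on $[0,\infty)$, and solves a Fredholm equation for $\eta$ via Lemma~\ref{l:Fredholm} with $O(\delta)$ source. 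The estimate $\|g - \bar h\|_{L^\infty}\lesssim \delta$ then follows in two pieces: compare $g$ to $h$ (same operator $\mathcal L - 6$, sources $\tilde\ell$ and $\ell$ differing by $O(\delta)$), and $h$ to $\bar h$ via $\|\eta\|_{L^\infty}\lesssim\delta$. No perturbative Jost-function construction for $\mathcal L_K - 4\mu^2$ and no Lyapunov--Schmidt reduction appear.

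Your Lyapunov--Schmidt proposal has a genuine gap. The obstruction to a Schwartz solution of $(\mathcal L_K - 4\mu^2)w = R_0$ is the single scalar $\langle R_0, \mathrm{Im}\,\tilde k\rangle_p$, and you correctly observe it is $O(\delta)$ rather than zero. But there is no free parameter in the equation for $w$ with which to kill it: the source $R_0$ is fixed once $a_0$ is fixed, and adding a multiple of the bounded homogeneous solution $\mathrm{Im}\,\tilde k$ only destroys decay further. A Lyapunov--Schmidt scheme resolves a solvability obstruction by tuning a parameter; here the natural one would be $a_0$ itself---redefining it via orthogonality to the perturbed generalized eigenfunction rather than to $\mathrm{Im}\,k$---but that is a different argument from ``verifying the projection vanishes self-consistently,'' which it will not. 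There is also a bookkeeping slip: with $\mathcal L_K - 4\mu^2 = (\mathcal L - 6) + Q$ and $\bar h = g + w$, the equation for $w$ is $(\mathcal L_K - 4\mu^2)w = R_0$, equivalently $(\mathcal L - 6)w = R_0 - Qw$; your displayed equation $(\mathcal L_K - 4\mu^2)w = R_0 - Qw$ double-counts $Q$, and the Green's operator $\mathcal G$ in your fixed-point formulation should be the one for $\mathcal L - 6$, not the ``perturbed'' one.
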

\begin{proof}
Let $\ell = (\psi \bar f' + (a_0 + \frac 1 2) \psi'\bar f)/p$, and define
\[h (y) = \frac 1 {12} \mbox{Im}\left(k(y) \int_{-\infty}^y \bar k \ell + \bar k(y)\int_y^\infty k\ell\right).\]
By Lemma \ref{l:k}, $h$ solves $\mathcal L h - 6 h = \ell$, Since $\ell$ is odd and $k(-y) = \bar k(y)$, we have that $h$ is odd. By our choice of $a_0$, we have $\langle \ell, \mbox{Im}(k)\rangle = 0$, which implies $h$ is Schwarz class. In fact, the decay of $\bar f'$ and $\bar f$, and the explicit formula for $k$, imply that $|h(y)| + |h'(y)|\lesssim e^{-|y|/\sqrt 2}$. Next, we set up an integral equation for the difference between $h$ and $\bar h$, as above. If $\bar h$ satisfies $\mathcal L_K \bar h - 4\mu^2 \bar h = \ell$, then $\eta = \bar h - h$ satisfies
\begin{equation}\label{e:4mu2}
(\mathcal L - 4\mu^2)\eta  = bh' - dh  + (4\mu^2-6)h + b\eta' - d\eta.
\end{equation}
Recall $|\mu^2 - \frac 3 2 |\lesssim \delta$. We construct a Green's function for $\mathcal L - 4\mu^2$ on $[0,\infty)$ using a modification of the function $k$. Let $\gamma = \sqrt{4\mu^2-2}$, $c_1 = \dfrac {3}{4\mu^2+1}$, $c_2 = \dfrac{3\gamma}{8\mu^2+2}$, and
\[k^\circ(y) = e^{i\gamma y}\left(1 + \frac 1 2 c_1\sech^2\left(\frac y{\sqrt 2}\right)+ ic_2\sqrt 2 \tanh\left(\frac y{\sqrt 2}\right)\right).\]
It can be checked by direct computation that $\mathcal L k^\circ - 4\mu^2 k^\circ = 0$. The Wronskian $W(\mbox{Re } k^\circ,\mbox{Im } k^\circ)$ is given by the constant $c_0:= (1+\frac{c_1}2)(c_1 + \gamma(1+\frac{c_1}2))$. Define the Green's function
\[G_\mu(y,w) = \begin{cases}\mbox{Im } k^\circ(y)\mbox{Re } k^\circ(w)/c_0, &0\leq y < w,\\
				       \mbox{Re }k^\circ(y)\mbox{Im } k^\circ(w)/c_0, &0\leq w < y.\end{cases}\]
Then the ODE \eqref{e:4mu2} is equivalent to the integral equation
\[\eta = \int_0^\infty G_\mu(y,w) (	bh' - dh  + (4\mu^2-6)h)(w)\dd w + \int_0^\infty G_\mu(y,w)(b\eta' - d\eta)(w)\dd w.\]
The first integral on the right-hand side converges because of the decay of $h$, so we can integrate by parts in the second integral and apply Lemma \ref{l:Fredholm}, using properties of $b$ and $d$.  As above, formula \eqref{e:series} and the decay of $G_\mu$ imply the solution $\eta$ satisfies $|\eta(y)|+|\eta'(y)|\lesssim e^{-|y|/\sqrt 2}$. Since $\eta(0)=0$, we can extend it by oddness to obtain $\bar h = h + \eta$, an exponentially decaying solution to $\mathcal L_K \bar h - 4\mu^2\bar h = \ell$ on the real line. 

For the last claim, let $\tilde \ell = \psi f' + (a+ \frac 1 2) \psi'f$. Then by \eqref{e:Lminus6}, we have $(\mathcal L - 6)g = \tilde \ell$. It is clear that $|\ell(y) - \tilde \ell(y)| \lesssim \delta e^{-|y|/\sqrt 2}$, and the relationship $(\mathcal L-6)(g-h) = \tilde \ell - \ell$ implies $g-h$ can be written
\begin{align*}
|g(y)-h(y)| &= \left|\frac 1 {12} \mbox{Im}\left(k(y) \int_{-\infty}^y \bar k (\tilde \ell - \ell) + \bar k(y)\int_y^\infty k(\tilde \ell- \ell)\right)\right|,\\
&\lesssim \delta \|k\|_{L^\infty}^2 \int_{-\infty}^y e^{-|s|/\sqrt 2} \dd s \lesssim \delta.
\end{align*}
Since $\|\bar h - h\|_{L^\infty} = \|\eta\|_{L^\infty} \lesssim \delta$, we conclude $\|g - \bar h\|_{L^\infty} \lesssim \delta$.
\end{proof}

With $\bar g = p \bar h$, \eqref{e:D} simplifies to
\[\tilde{\mathcal D}(v_1,\alpha) = \tilde{\mathcal B}(v_1) - \int \psi b (\partial_y v_1)^2 + \frac 1 4 \int (\psi'b)'v_1^2 - a_0 \alpha\int \psi' \bar f v_1 + \alpha^2\int \bar f \bar g.\]
From $\|p - 1\|_{L^\infty} \lesssim \delta$, it is clear that $\|\bar g - g\|_{L^\infty} \leq \|\bar g- \bar h\|+\|\bar h - g\| \lesssim \delta$.

Since $\tilde{\mathcal B}$ and $\tilde{\mathcal D}$ are perturbations of forms that arise in the constant-speed case, we quote the coercivity results obtained in \cite{KMM} for the unperturbed forms $\mathcal B$ and $\mathcal D$. We work with the following weighted norms, which will be technically convenient in the later stages of the proof:
\[\|v_1\|_{H_\omega^1}^2 := \int \left(|\partial_yv_1|^2 + v_1^2\right)\,\sech\left(\frac y {2\sqrt 2}\right)\dd y, \quad \|v_2\|_{L_\omega^2}^2 := \int v_2^2\,\sech\left(\frac y {2\sqrt 2}\right)\dd y,\]
and
\begin{equation}\label{e:weighted}
\|v\|_{H_\omega^1\times L_\omega^2}^2 := \|v_1\|_{H_\omega^1}^2 + \|v_2\|_{L_\omega^2}^2.
\end{equation}
It is also convenient to work with the auxiliary function $w = \zeta v_1$, where $\zeta(y) = \sqrt{\psi'(y)} = \sech\left(y/8\sqrt 2\right)$. It can be shown by direct computation that
\begin{equation}\label{e:zeta}
\|v_1\|_{H_\omega^1} \lesssim \|\partial_y w\|_{L^2} = \|\partial_y(\zeta v_1)\|_{L^2},
\end{equation}
see \cite[Proposition 5.1]{KMM} for the proof.
\begin{lemma}\label{l:BD}
\textup{(a) (\cite[Lemma 4.1]{KMM})} Define the quadratic form
\[\mathcal B(v) := \int \psi'(\partial_y v)^2 - \frac 1 4 \int \psi'''v^2 - 3 \int \psi H H'v^2.\]
There exists $\kappa_1 >0$ such that, for any odd function $v\in H^1$,
\begin{equation}\label{e:Btilde}
\langle v, Y_1 \rangle = 0 \quad \implies \quad \mathcal B(v) \geq \kappa_1\|\partial_y(\zeta v)\|_{L^2}^2,
\end{equation}
where $Y_1$ is the eigenfunction satisfying $\mathcal L Y_1 = \frac 3 2 Y_1$.

\textup{(b) (\cite[Lemma 4.2]{KMM})} Define the bilinear form
\[\mathcal D(v,\alpha) = \mathcal B(v) -\alpha a\int \psi' fv + \alpha^2\int fg,\]
with $a,f,$ and $g$ as defined above. There exists $\kappa_2>0$ such that for every odd $v\in H_\omega^1$,
\begin{equation}\label{e:tildeD}
\langle v, Y_1 \rangle = 0 \quad \implies \mathcal D(v,\alpha) \geq \kappa_2(\alpha^2 + \|\partial_y(\zeta v)\|_{L^2}^2).
\end{equation}
\end{lemma}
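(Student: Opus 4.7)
The plan is to follow the strategy of \cite{KMM}, which reduces both coercivity statements to the spectral analysis of a Schr\"odinger operator obtained via the substitution $w = \zeta v$ with $\zeta = \sqrt{\psi'} = \sech(y/8\sqrt 2)$. Since $\psi' = \zeta^2$, an integration by parts shows
\begin{equation*}
\mathcal B(v) = \int (\partial_y w)^2 \dd y + \int U(y) w^2 \dd y, \qquad U(y) := \frac{\zeta''}{\zeta} - \frac{\psi'''}{4\zeta^2} - \frac{3\psi HH'}{\zeta^2},
\end{equation*}
so $\mathcal B(v) = \langle w, \mathcal M w\rangle$ for the self-adjoint operator $\mathcal M := -\partial_y^2 + U$ on $L^2(\R)$.

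For part (a), I would analyze $\mathcal M$ on the odd subspace. Since $U$ decays at infinity, $\mathcal M$ has essential spectrum $[0,\infty)$ and only finitely many discrete eigenvalues below it. A direct computation, using the explicit formulas for $\psi$, $\zeta$, and $H$, shows that $\mathcal M$ has exactly one simple negative odd eigenvalue, whose eigenfunction can be identified quantitatively with $\zeta Y_1$ (this parallels the fact that $\mathcal L Y_1 = \tfrac 3 2 Y_1$ in the unweighted setting). The orthogonality $\langle v, Y_1\rangle = 0$ translates, modulo an error that is absorbable into the $H^1$-norm of $w$, into orthogonality of $w$ to this ground state in $L^2$, so the spectral theorem yields $\langle w,\mathcal M w\rangle \gtrsim \|w\|_{H^1}^2$. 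Combined with \eqref{e:zeta}, this gives \eqref{e:Btilde}.

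For part (b), I would treat $\mathcal D(v,\alpha)$ as a quadratic-form perturbation of $\mathcal B(v)$. Part (a) already provides a coercive bound on $\mathcal B(v)$, so the issue is to absorb the cross term $-\alpha a \int \psi' f v$ into the sum $\mathcal B(v) + \alpha^2 \int fg$. The specific choice of $a$ in \eqref{e:a} and of $g$ as the solution of $(\mathcal L - 6)g = \psi f' + (a+\tfrac 1 2)\psi' f$ is made precisely so that $\int fg > 0$, which is the nonlinear Fermi golden rule verified in \cite{KMM}. Given this positivity, a Cauchy--Schwarz estimate $|\alpha a \int \psi' f v| \leq \eta \|\partial_y(\zeta v)\|_{L^2}^2 + C_\eta \alpha^2$ with $\eta$ sufficiently small lets us absorb the cross term into $\mathcal B(v)$ and retain a fraction of the $\alpha^2\int fg$ contribution, yielding \eqref{e:tildeD}.

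The main obstacle is the pair of spectral facts underlying the argument: the identification of the unique odd negative eigenfunction of $\mathcal M$ as being quantitatively close to $\zeta Y_1$, and the strict positivity of the Fermi-golden-rule constant $\int fg$. Both are established in \cite[Lemmas 4.1--4.2]{KMM} by a combination of careful explicit estimates and a numerical computation for $a$, and they are the heart of the matter; once they are granted, the remainder of the proof reduces to perturbation-theoretic manipulations in a Hilbert space.
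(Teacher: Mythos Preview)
The paper does not prove this lemma at all: as the sentence immediately preceding it says, ``we quote the coercivity results obtained in \cite{KMM} for the unperturbed forms $\mathcal B$ and $\mathcal D$.'' Lemma~\ref{l:BD} is stated purely as a citation of \cite[Lemmas 4.1--4.2]{KMM}, with no argument given. Your sketch is therefore not competing with a proof in this paper, but rather outlining the content of the cited reference, and in that respect it is accurate in broad strokes: the substitution $w=\zeta v$ does reduce $\mathcal B(v)$ to $\langle w,\mathcal M w\rangle$ for a Schr\"odinger operator $\mathcal M=-\partial_y^2+U$, the odd discrete spectrum of $\mathcal M$ is the issue, and the Fermi golden rule $\int fg>0$ is what makes part (b) work.

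One caution on your part (b): the bare Cauchy--Schwarz step you describe, $|\alpha a\int\psi' f v|\le \eta\|\partial_y(\zeta v)\|_{L^2}^2 + C_\eta\alpha^2$, does not close by itself, because $C_\eta\to\infty$ as $\eta\to 0$, so one cannot simultaneously take $\eta$ small and keep $\int fg - C_\eta>0$ without further quantitative input. In \cite{KMM} this is handled not by a generic absorption but by a sharper argument that exploits the specific choice of $a$ and $g$ (effectively a completion of the square tied to the numerically verified constants). You do acknowledge that the explicit estimates and the numerical value of $a$ are ``the heart of the matter,'' so this is more a matter of precision than a genuine gap in your understanding.
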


First, we extend Lemma \ref{l:BD}(a) to the perturbed quadratic form $\tilde{\mathcal B}$:
\begin{lemma}\label{l:B}
There exists $\kappa >0$ such that, for any odd function $v_1 \in H_\omega^1$, 
\[ \langle v_1, \bar Y_1 \rangle_p = 0 \quad \implies \quad \tilde{\mathcal B}(v_1) \geq \kappa \|\partial_y (\zeta v_1)\|_{L^2}^2.\]
\end{lemma}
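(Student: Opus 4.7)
The plan is to treat $\tilde{\mathcal B}$ as a $\delta$-perturbation of the form $\mathcal B$ from Lemma \ref{l:BD}(a), and to replace the given orthogonality $\langle v_1,\bar Y_1\rangle_p = 0$ by the standard condition $\langle v_1, Y_1\rangle = 0$ up to a small controllable error; the conclusion will then follow from Lemma \ref{l:BD}(a).

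For the perturbation of the form, I write
\[\tilde{\mathcal B}(v_1) - \mathcal B(v_1) = -3\int \psi (KK'-HH')v_1^2 \dd y.\]
Since $K=H+H_\delta$ with $|H_\delta| + |H_\delta'|\lesssim \delta e^{-\sqrt 2|y|}$ by Theorem \ref{t:exist}, $|KK'-HH'|\lesssim \delta e^{-\sqrt 2|y|}$, so
\[|\tilde{\mathcal B}(v_1) - \mathcal B(v_1)| \lesssim \delta \int e^{-\sqrt 2|y|} v_1^2 \dd y \lesssim \delta \|v_1\|_{H_\omega^1}^2 \lesssim \delta \|\partial_y(\zeta v_1)\|_{L^2}^2,\]
using that $e^{-\sqrt 2|y|}\lesssim \sech(y/2\sqrt 2)$ on $\R$ and invoking \eqref{e:zeta}.

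For the orthogonality defect, I exploit
\[\langle v_1, Y_1\rangle = \langle v_1, \bar Y_1\rangle_p + \langle v_1, Y_1-\bar Y_1\rangle - \langle v_1, (p-1)\bar Y_1\rangle = \langle v_1, Y_1-\bar Y_1\rangle - \langle v_1, (p-1)\bar Y_1\rangle.\]
By Theorem \ref{t:spectrum}, $|Y_1-\bar Y_1|\lesssim \delta e^{-|y|/\sqrt 2}$ and $|\bar Y_1|\lesssim e^{-|y|/\sqrt 2}$; together with $\|p-1\|_{L^\infty}\lesssim \delta$, a Cauchy-Schwarz estimate against the weight $\sech(y/2\sqrt 2)$ gives
\[|\langle v_1, Y_1\rangle| \lesssim \delta \int |v_1| e^{-|y|/\sqrt 2}\dd y \lesssim \delta \|v_1\|_{L_\omega^2} \lesssim \delta \|\partial_y(\zeta v_1)\|_{L^2},\]
the middle step being justified by $\int e^{-2|y|/\sqrt 2}\cosh(y/2\sqrt 2)\dd y < \infty$. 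Setting $c := \langle v_1, Y_1\rangle$ (recall $\|Y_1\|_{L^2}=1$) and $\tilde v_1 := v_1 - c Y_1$, the function $\tilde v_1$ is odd, satisfies $\langle \tilde v_1, Y_1\rangle = 0$, and $|c|\lesssim \delta \|\partial_y(\zeta v_1)\|_{L^2}$.

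Expanding via the polarization $B$ of the quadratic form $\mathcal B$,
\[\mathcal B(v_1) = \mathcal B(\tilde v_1) + 2c\, B(\tilde v_1, Y_1) + c^2 \mathcal B(Y_1),\]
Lemma \ref{l:BD}(a) yields $\mathcal B(\tilde v_1)\geq \kappa_1\|\partial_y(\zeta \tilde v_1)\|_{L^2}^2$, while the exponential decay of $Y_1$ and $Y_1'$ combined with weighted Cauchy-Schwarz gives $|B(\tilde v_1, Y_1)|\lesssim \|\partial_y(\zeta \tilde v_1)\|_{L^2}$ and $|\mathcal B(Y_1)|\lesssim 1$. Using $|c|\lesssim \delta \|\partial_y(\zeta v_1)\|_{L^2}$ together with $\|\partial_y(\zeta \tilde v_1)\|_{L^2}=(1+O(\delta))\|\partial_y(\zeta v_1)\|_{L^2}$, I obtain $\mathcal B(v_1)\geq (\kappa_1 - C\delta)\|\partial_y(\zeta v_1)\|_{L^2}^2$; adding the perturbation estimate of the first step yields $\tilde{\mathcal B}(v_1)\geq (\kappa_1 - C\delta)\|\partial_y(\zeta v_1)\|_{L^2}^2$, which is the claim for $\delta$ small enough. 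The main obstacle is bookkeeping: all the small-error estimates above rely on the compatibility between the weight $\sech(y/2\sqrt 2)$ of $H_\omega^1\times L_\omega^2$ and the decay rates $e^{-|y|/\sqrt 2}$ and $e^{-\sqrt 2 |y|}$ coming from $\bar Y_1$, $Y_1-\bar Y_1$, and $H_\delta$, and each of the resulting weighted integrals must be verified to be finite.
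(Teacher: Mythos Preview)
Your proof is correct and follows essentially the same approach as the paper: decompose $v_1 = \tilde v_1 + \langle v_1,Y_1\rangle Y_1$, use the closeness of $\bar Y_1$ to $Y_1$ and of $p$ to $1$ to bound $|\langle v_1,Y_1\rangle|\lesssim \delta\|\partial_y(\zeta v_1)\|_{L^2}$, apply Lemma~\ref{l:BD}(a) to $\tilde v_1$, and absorb all cross terms as $O(\delta)$ errors. The only organizational difference is that you first pass from $\tilde{\mathcal B}(v_1)$ to $\mathcal B(v_1)$ and then from $\mathcal B(v_1)$ to $\mathcal B(\tilde v_1)$, whereas the paper expands $\tilde{\mathcal B}(v_1)$ directly in terms of $\mathcal B(\tilde v_1)$ in a single equation; the estimates involved are the same.
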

\begin{proof}
To apply Lemma \ref{l:BD}(a), we decompose $v_1 = \tilde v_1 + \langle v_1, Y_1 \rangle Y_1$, so that $\langle \tilde v_1, Y_1\rangle = 0$. 
By the definition of $\tilde{\mathcal B}$, we have
\begin{align}\label{e:B}
\tilde{\mathcal B}(v_1) &= \mathcal B(\tilde v_1) -3\int \psi (H_\delta K' + H H_\delta') (\tilde v_1)^2\nonumber\\
&\quad + \int \psi' \left[ \langle v_1,  Y_1\rangle^2 (\partial_y Y_1)^2 + 2\partial_y \tilde v_1 \langle v_1,Y_1\rangle \partial_y Y_1\right]\\
&\quad -\int \left(\frac 1 4 \psi''' + 3 \psi K K'\right) \left[ \langle v_1,  Y_1\rangle^2  Y_1^2 + 2\tilde v_1 \langle v_1, Y_1\rangle  Y_1\right].\nonumber
\end{align}
From Theorem \ref{t:spectrum}, we have $\|Y_1 - \bar Y_1\|_{L^\infty} \lesssim \delta$. We conclude from $\langle v_1,\bar Y_1\rangle_p = 0$ and $\|p(y)-1\|_{L^\infty}\lesssim \delta$ that 
\begin{equation}\label{e:tildeY1}
|\langle  v_1, Y_1\rangle| \leq |\langle v_1,Y_1-\bar Y_1\rangle_p|+|\langle v_1,Y_1(1-p)\rangle| \lesssim \delta \|v_1\|_{L^2}.
\end{equation} 
Since $|H_\delta K' + H H_\delta'| \lesssim \delta e^{-\sqrt 2 |y|}$ and $|\langle v_1, Y_1\rangle| \leq \| v_1\|_{H_\omega^1}\lesssim \|\partial_y(\zeta v_1)\|_{L^2}$ (by the exponential decay of $Y_1$), we conclude from Lemma \ref{l:BD}(a), \eqref{e:B}, \eqref{e:tildeY1}, and the decay of $\psi'$ and $Y_1$ that
\[ \tilde{\mathcal B}(v_1) \geq (\kappa_0- C\delta)\|\partial_y(\zeta\tilde v_1)\|_{L^2}^2 - C\delta\|\partial_y(\zeta v_1)\|_{L^2}^2.\]
Finally, observe that for $\delta$ sufficiently small, \eqref{e:tildeY1} implies $\|\partial_y(\zeta\tilde v_1)\|_{L^2} \geq \frac 1 2 \|\partial_y(\zeta v_1)\|_{L^2}$. Indeed, we have $\partial_y(\zeta v_1) - \partial_y(\zeta \tilde v_1) = \langle v_1, Y_1\rangle \partial_y(\zeta Y_1)$, and $\partial_y(\zeta Y_1)$ is an explicit function in $L^2$. We conclude $\tilde{\mathcal B}(v_1) \geq \kappa \|\partial_y(\zeta v_1)\|_{L^2}$.
\end{proof}

We are now ready to prove the coercivity of $\tilde{\mathcal D}$:
\begin{lemma}\label{l:D}
There exists $\kappa>0$ such that for any odd $v_1\in H_\omega^1$ such that $\langle v_1, \bar Y_1\rangle_p = 0$, 
\begin{equation}\label{e:Dcoerce}
\tilde{\mathcal D}(v_1,\alpha) \geq \kappa\left(\alpha^2 + \|\partial_y (\zeta v_1)\|_{L^2}^2\right).
\end{equation}
\end{lemma}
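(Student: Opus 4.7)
The plan is to compare $\tilde{\mathcal D}$ to the unperturbed bilinear form $\mathcal D$ and then invoke the coercivity estimate of Lemma \ref{l:BD}(b). There are three sources of perturbation to dispatch: (i) the $b$-dependent terms $-\int \psi b (\partial_y v_1)^2 + \tfrac 1 4 \int (\psi' b)' v_1^2$ that come from the first-order part of $\mathcal L_K$; (ii) the discrepancy between $(a_0, \bar f, \bar g)$ and their unperturbed analogs $(a, f, g)$; and (iii) the mismatch between the orthogonality $\langle v_1, \bar Y_1\rangle_p = 0$ that we are given and the condition $\langle \cdot, Y_1\rangle = 0$ required by Lemma \ref{l:BD}(b).

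First, decompose $v_1 = \tilde v_1 + c Y_1$ with $c = \langle v_1, Y_1\rangle$, so that $\tilde v_1$ is odd and satisfies $\langle \tilde v_1, Y_1\rangle = 0$. Exactly as in the proof of Lemma \ref{l:B}, the orthogonality $\langle v_1, \bar Y_1\rangle_p = 0$ combined with $\|Y_1 - \bar Y_1\|_{L^\infty} + \|p-1\|_{L^\infty} \lesssim \delta$ and the exponential decay of $Y_1$ yields $|c| \lesssim \delta \|v_1\|_{L^2_\omega} \lesssim \delta \|\partial_y(\zeta v_1)\|_{L^2}$ via \eqref{e:zeta}. Writing
\[\tilde{\mathcal D}(v_1, \alpha) = \mathcal D(\tilde v_1, \alpha) + R_1 + R_2 + R_3,\]
we take $R_1$ to collect the $b$-terms, $R_2 := \alpha\int \psi'(a f - a_0 \bar f) v_1 + \alpha^2 \int(\bar f\bar g - fg)$ for the $(a_0, \bar f, \bar g)$-versus-$(a, f, g)$ discrepancy, and $R_3$ to consist of the cross terms $2c \mathcal B_B(\tilde v_1, Y_1) + c^2 \mathcal B(Y_1) - a c \alpha \int \psi' f Y_1$ produced by the decomposition, with $\mathcal B_B$ denoting the bilinear form associated to $\mathcal B$.

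Each remainder is $O(\delta)(\alpha^2 + \|\partial_y(\zeta v_1)\|_{L^2}^2)$. For $R_1$, the first piece uses $|\psi b| \lesssim \delta e^{-\sqrt 2 |y|}\lesssim \delta \sech(y/2\sqrt 2)$ so that $|\int \psi b(\partial_y v_1)^2| \lesssim \delta \|v_1\|_{H^1_\omega}^2$; for the second piece I integrate by parts to write $\tfrac 1 4 \int (\psi' b)' v_1^2 = -\tfrac 1 2 \int \psi' b \, v_1 \partial_y v_1$, and apply Cauchy--Schwarz with the exponential decay of $b$, avoiding the use of $b'$ entirely. Invoking \eqref{e:zeta} gives $|R_1| \lesssim \delta \|\partial_y(\zeta v_1)\|_{L^2}^2$. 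For $R_2$, the bounds $|\bar f - f| + |\bar f' - f'| \lesssim \delta e^{-|y|/\sqrt 2}$ from Theorem \ref{t:spectrum}, $|a-a_0|\lesssim \delta$ established before Lemma \ref{l:barh}, and $\|\bar g - g\|_{L^\infty}\lesssim \delta$ from that lemma combine to give $|R_2| \lesssim \delta(\alpha^2 + \|v_1\|_{L^2_\omega}^2)$. For $R_3$, the bound on $|c|$ together with the $L^2$-finiteness of the explicit functions $\partial_y(\zeta Y_1)$, $\psi' f Y_1$, etc., yields $|R_3| \lesssim \delta(\alpha^2 + \|\partial_y(\zeta v_1)\|_{L^2}^2)$.

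Finally, applying Lemma \ref{l:BD}(b) to $\tilde v_1$ gives $\mathcal D(\tilde v_1, \alpha) \geq \kappa_2(\alpha^2 + \|\partial_y(\zeta \tilde v_1)\|_{L^2}^2)$, and the identity $\partial_y(\zeta v_1) - \partial_y(\zeta \tilde v_1) = c\, \partial_y(\zeta Y_1)$ together with the bound on $|c|$ gives $\|\partial_y(\zeta \tilde v_1)\|_{L^2}^2 \geq (1 - C\delta)\|\partial_y(\zeta v_1)\|_{L^2}^2$. Assembling the estimates yields $\tilde{\mathcal D}(v_1, \alpha) \geq (\kappa_2 - C\delta)(\alpha^2 + \|\partial_y(\zeta v_1)\|_{L^2}^2)$, and \eqref{e:Dcoerce} follows by choosing $\delta$ small enough. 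The main obstacle is to make each error genuinely $O(\delta)$ in the correct weighted norm; the integration-by-parts trick on the $(\psi' b)'$ term is essential because \eqref{e:decay} provides no exponential decay on $b'$ itself, and without it the bound $|b'|\lesssim \delta$ is not compatible with the weight $\zeta^2$ appearing in $\psi'$.
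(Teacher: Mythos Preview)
Your approach is the paper's: decompose $v_1 = \tilde v_1 + cY_1$, compare $\tilde{\mathcal D}(v_1,\alpha)$ to $\mathcal D(\tilde v_1,\alpha)$, bound each discrepancy by $O(\delta)(\alpha^2 + \|\partial_y(\zeta v_1)\|_{L^2}^2)$, and invoke Lemma~\ref{l:BD}(b). Your integration-by-parts treatment of $\tfrac14\int(\psi'b)'v_1^2$ is a genuine clarification of a step the paper labels ``clear''; without it the bound $|b'|\lesssim\delta$ only yields control by $\delta\|\zeta v_1\|_{L^2}^2$, and since $\zeta^2=\psi'$ decays more slowly than the $H^1_\omega$ weight $\sech(y/2\sqrt 2)$, this is not dominated by $\|v_1\|_{H^1_\omega}^2$.

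There is, however, a bookkeeping omission. Summing your pieces gives
\[
\mathcal D(\tilde v_1,\alpha) + R_1 + R_2 + R_3 = \mathcal B(v_1) - a_0\alpha\int\psi'\bar f\, v_1 + \alpha^2\int\bar f\bar g + R_1,
\]
whereas $\tilde{\mathcal D}(v_1,\alpha)$ contains $\tilde{\mathcal B}(v_1)$ rather than $\mathcal B(v_1)$. The missing term
\[
\tilde{\mathcal B}(v_1) - \mathcal B(v_1) = -3\int \psi\,(KK'-HH')\,v_1^2
\]
does not fall into any of your $R_j$: it is neither a $b$-term, nor an $(a_0,\bar f,\bar g)$-versus-$(a,f,g)$ discrepancy, nor a cross term from the splitting $v_1=\tilde v_1 + cY_1$. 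It is harmless once noticed---Theorem~\ref{t:exist} gives $|KK'-HH'| = |H_\delta K' + HH_\delta'| \lesssim \delta e^{-\sqrt 2|y|}$, so the term is $O(\delta)\|v_1\|_{L^2_\omega}^2$---but it needs to be recorded. The paper handles it by bundling the whole quantity $\tilde{\mathcal B}(v_1)-\mathcal B(\tilde v_1)$ together and citing the proof of Lemma~\ref{l:B}, where exactly this potential discrepancy already appears.
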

\begin{proof}
Proceeding similarly to the proof of Lemma \ref{l:B}, we write $v_1 = \tilde v_1 + \langle v_1, Y_1 \rangle Y_1$, with $|\langle v_1,Y_1\rangle| \lesssim \delta$.
Writing 
\begin{align*}
\tilde{\mathcal D}(v_1,\alpha) &= \mathcal D(\tilde v_1,\alpha) + (\tilde{\mathcal B}(v_1) - \mathcal B(\tilde v_1)  ) -(a_0\alpha\int \psi' \bar fv_1 - a\alpha \int \psi' f \tilde v_1)\\ 
&+ \alpha^2\left(\int \bar f \bar g - \int f g\right) - \int \psi b(\partial_y v_1)^2 + \frac 1 4 (\psi'b)'v_1^2,
\end{align*}
from the proof of Lemma \ref{l:B}, we have
\[|\tilde{\mathcal B}(v_1) - \mathcal B(\tilde v_1)| \lesssim \delta \|v_1\|_{H_\omega^1}.\]
Because $|\bar f - f|\lesssim \delta e^{-|y|/\sqrt 2}$ and $|a_0 - a|\lesssim \delta$, the next term
\[\left| a_0\alpha\int \psi' \bar fv_1 - a\alpha \int \psi' f \tilde v_1\right| \lesssim \delta \alpha\|v_1\|_{H_\omega^1} \lesssim \delta(\alpha^2 + \|v_1\|_{H_\omega^1}^2).\]
Since $\bar f$ and $\bar g$ are $\delta$-close to $f$ and $g$, we have
\[\alpha^2\left|\int \bar f \bar g - \int f g\right| \lesssim \delta \alpha^2.\]
%\[\mathcal B(v_1) \gtrsim \kappa_1\|v_1\|_{H^1}^2.\]
Finally, the bound \eqref{e:decay} clearly implies
\[\left|\int \psi b (\partial_y v_1)^2 - \frac 1 4 \int (\psi'b)'v_1^2\right| \lesssim \delta \|v_1\|_{H_\omega^1}^2.\]
Combining these bounds with \eqref{e:zeta} and Lemma \ref{l:BD}(b), we obtain \eqref{e:Dcoerce} for sufficiently small $\delta$.
\end{proof}

\section{Conclusion of the Proof}\label{s:conclusion}

%In this section...
Let $\vp^{in} \in H^1\times L^2$ be odd and satisfy $\|\vp^{in}\|_{H^1\times L^2} < \eps$ for $\eps>0$ a small number to be chosen. Proposition \ref{p:orbit} implies that the solution $\vp$ of \eqref{e:perturb} with initial data $\vp^{in}$ exists in $H^1\times L^2$ and
\[\|\vp(t)\|_{H^1\times L^2} \lesssim \eps,\]
for all $t\in\R$. By the spectral decomposition of Section \ref{s:orbit}, this implies
\begin{equation}\label{e:small}
\|u(t)\|_{H^1\times L^2} + \|v(t)\|_{H^1\times L^2} + \|u_1(t)\|_{L^\infty} + \|v_1(t)\|_{L^\infty} + |z(t)| \lesssim \eps,
\end{equation}
for all $t\in \R$. 

The proof of Theorem \ref{t:main} relies on the following fact, whose proof closely mirrors the proof of Proposition 5.1 in \cite{KMM}.
\begin{proposition} For $z(t) = (z_1(t),z_2(t))$ satisfying \eqref{e:z} and $v(t) = (v_1(t),v_2(t))$ satisfying \eqref{e:v}, one has
\begin{equation}\label{e:time}
\int_\R \left(|z(t)|^4 + \|v(t)\|_{H_\omega^1\times L_\omega^2}^2\right) \dd t \lesssim \eps^2.
\end{equation}
\end{proposition}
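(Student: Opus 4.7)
The plan is to use the master identity $\frac{d}{dt}(\mathcal I + \mathcal J) = -\tilde{\mathcal D}(v_1,\alpha) + \mathcal R_{\tilde{\mathcal D}}$ established in Section \ref{s:vir}, combined with the coercivity estimate of Lemma \ref{l:D}, then patch in additional functionals to recover the missing $\beta^2$ and $\|v_2\|_{L_\omega^2}^2$ contributions. First I would observe that, by Cauchy--Schwarz together with \eqref{e:small}, the functional $\mathcal F(t) := \mathcal I(t) + \mathcal J(t)$ obeys the uniform pointwise bound
\[
|\mathcal F(t)| \lesssim \|\psi\|_{L^\infty} \|v(t)\|_{H^1\times L^2}^2 + |z|^2(t)\,\|\bar g\|_{L^2}\|v(t)\|_{H^1\times L^2} \lesssim \eps^2.
\]
Integrating the identity from $-T$ to $T$ and invoking Lemma \ref{l:D} then gives
\[
\kappa \int_{-T}^T \bigl(\alpha^2 + \|\partial_y(\zeta v_1)\|_{L^2}^2\bigr)\dd t \leq 2\sup_t |\mathcal F(t)| + \int_{-T}^T |\mathcal R_{\tilde{\mathcal D}}|\dd t,
\]
and by \eqref{e:zeta} the left side dominates $\int(\alpha^2 + \|v_1\|_{H_\omega^1}^2)\dd t$.

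Next I would inspect $\mathcal R_{\tilde{\mathcal D}}$ defined in \eqref{e:remainder}. Each summand is the pairing of an exponentially localized weight ($\psi$, $\psi'$, or $\bar g$) with one of the forcing terms $F_\alpha, F_\beta, F_1, F_2$. Because $F_\alpha, F_\beta$ are each cubic in $(\vp_1, z)$ and $F_1, F_2$ are at least quadratic in the solution, together with \eqref{e:small} and Cauchy--Schwarz I would bound
\[
|\mathcal R_{\tilde{\mathcal D}}| \lesssim \eps\bigl(\alpha^2 + \beta^2 + \|v\|_{H_\omega^1\times L_\omega^2}^2\bigr) + \eps\,|z|^4.
\]
The terms on the right not yet appearing on the LHS ($\beta^2$ and $\|v_2\|_{L_\omega^2}^2$) are dealt with separately below and then absorbed by choosing $\eps$ small.

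To recover $\beta^2$, I would use $2\mu\beta = \dot\alpha - F_\alpha$ from \eqref{e:alphabeta}, giving
\[
2\mu\int_{-T}^T \beta^2\dd t = \int_{-T}^T \beta\dot\alpha\dd t - \int_{-T}^T \beta F_\alpha\dd t.
\]
Integration by parts converts the first integral into $\bigl[\beta\alpha\bigr]_{-T}^T - \int_{-T}^T \dot\beta\alpha\dd t$, and substituting $\dot\beta = -2\mu\alpha + F_\beta$ shows
\[
\int_{-T}^T \beta^2\dd t = \int_{-T}^T \alpha^2\dd t + O(\eps^2) + O\Bigl(\eps \int_{-T}^T |z|^4 \dd t\Bigr),
\]
where the boundary term is $O(\eps^2)$ by orbital stability. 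Since $|z|^4 = \alpha^2 + \beta^2$, both are now bounded once the $\alpha$ bound is in hand.

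For the $L_\omega^2$ control of $v_2$ I would introduce a second, localized energy-momentum functional
\[
\mathcal N(t) := -\int \chi(y)\, v_1(t) v_2(t)\dd y, \qquad \chi(y) = \sech\!\bigl(y/(2\sqrt 2)\bigr).
\]
Using \eqref{e:v} and integrating by parts, $\frac{d}{dt}\mathcal N$ produces the term $-\int \chi v_2^2$, a term of the form $\int \chi (\partial_y v_1)^2 + O(\|v_1\|_{H_\omega^1}^2)$ (coming from $\int \chi v_1 \mathcal L_K v_1$), a contribution $\alpha\int \chi v_1\bar f$, plus forcing terms from $F_1, F_2$ handled as before. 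Rearranging,
\[
\int_{-T}^T \|v_2\|_{L_\omega^2}^2\dd t \lesssim \sup_t |\mathcal N(t)| + \int_{-T}^T \bigl(\alpha^2 + \|v_1\|_{H_\omega^1}^2\bigr)\dd t + \eps\!\int_{-T}^T (\text{absorbable terms})\dd t,
\]
and $\sup_t|\mathcal N(t)| \lesssim \eps^2$ by orbital stability.

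Combining these four bounds, choosing $\eps$ small enough to absorb the $\eps$-weighted terms on the right, and letting $T\to\infty$ yields \eqref{e:time}. The main obstacle is the bookkeeping for $\mathcal R_{\tilde{\mathcal D}}$: one must verify that every cubic or higher term is genuinely controlled by the quadratic quantities on the LHS times a small factor, exploiting the exponential localization of $\psi'$, $\bar g$, $\bar f$, $b$, $\bar Y_1$ and the uniform $L^\infty$ smallness of $v_1$ and $z$ from \eqref{e:small}; this is where the proof parallels \cite{KMM} most closely but requires extra care because $\bar g$ and $\bar f$ are only known qualitatively through Lemma \ref{l:barh} and Theorem \ref{t:spectrum}.
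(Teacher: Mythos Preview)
Your proposal is correct and follows essentially the same route as the paper: the three building blocks you use --- the virial pair $\mathcal I+\mathcal J$ together with Lemma \ref{l:D} to control $\alpha^2+\|v_1\|_{H_\omega^1}^2$, the product $\alpha\beta$ to trade $\beta^2$ for $\alpha^2$, and the localized functional $\int\chi v_1v_2$ to recover $\|v_2\|_{L_\omega^2}^2$ --- are exactly the ones the paper uses, and your estimate on $\mathcal R_{\tilde{\mathcal D}}$ matches \eqref{e:Rest}. The only organizational difference is that the paper packages all three into a single Lyapunov functional $\mathcal K=\tfrac{\kappa}{4\mu}\gamma-(\mathcal I+\mathcal J)+2\sigma\int\sech(y/2\sqrt 2)\,v_1v_2$ (with $\gamma=\alpha\beta$) and proves the pointwise differential inequality $\dot{\mathcal K}\gtrsim |z|^4+\|v\|_{H_\omega^1\times L_\omega^2}^2$ directly, whereas you integrate each piece separately and close at the level of time-integrated inequalities; your integration-by-parts argument for $\beta^2$ is literally the integrated form of the paper's $\dot\gamma$ identity.
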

\begin{proof}
With $\alpha,\beta$ defined as above and satisfying \eqref{e:alphabeta}, let $\gamma(t) = \alpha(t)\beta(t)$. We will prove \eqref{e:time} as a consequence of the following three estimates:
\begin{align}
\frac d {dt} \gamma &\geq 2\mu(\beta^2-\alpha^2) - C\eps(|z(t)|^4 + \|v_1\|_{H_\omega^1}^2),\label{e:gamma}\\
-\frac d {dt}(\mathcal I + \mathcal J) &\geq \kappa (\alpha^2 + \|v_1\|_{H_\omega^1}^2) - C\eps (|z(t)|^4 + \|v_2\|_{L_\omega^2}^2),\label{e:IplusJ}\\
2 \frac d {dt} \int \sech\left(\frac y {2\sqrt 2}\right) v_1v_2 &\geq \|v_2\|_{L_\omega^2}^2 - C(|z(t)|^4 + \|v_1\|_{H_\omega^1}^2),\label{e:v1v2}
\end{align}
where $\mu, \kappa,$ and $C$ are fixed positive constants, and $w = v_1\sech(y/8\sqrt 2)$ as above.

For \eqref{e:gamma}, note that
\[\dot \gamma = \dot \alpha \beta + \alpha \dot \beta = 2\mu(\beta^2-\alpha^2) + \mathcal R_\gamma,\]
with $\mathcal R_\gamma = \beta F_\alpha + \alpha F_\beta$. Recalling that $F_\alpha = \dfrac 2 \mu z_2\langle 3K\vp_1^2 + \vp_1^3,\bar Y_1\rangle_p$ and $F_\beta =  -\dfrac 2 \mu z_1\langle 3K\vp_1^2 + \vp_1^3,\bar Y_1\rangle_p$, we substitute $\vp_1 = u_1+z_1\bar Y_1 = v_1 - |z|^2q + z_1\bar Y_1$ and use the exponential decay of $\bar Y_1$ (Theorem \ref{t:spectrum}) to obtain
\begin{equation}\label{e:FalphaFbeta}
|F_\alpha| + |F_\beta| \lesssim |z|\left(|z|^2+\|v_1\|_{L_\omega^2}^2\right).
\end{equation}
Since $|\alpha|,|\beta|\lesssim |z|^2$, \eqref{e:small} implies
\[|\mathcal R_\gamma|\lesssim |z|^3\left(|z|^2 + \|v_1\|_{L_\omega^2}^2\right)\lesssim \eps\left(|z|^4 + \|v_1\|_{L_\omega^2}^2\right).\]

To prove \eqref{e:IplusJ}, one can read off the proof of the corresponding statement in \cite[Proposition 5.1]{KMM} verbatim, with $K, \bar f,$ and $\bar g$ replacing $H, f,$ and $g$.  In particular, the coercivity of $\tilde{\mathcal D}(v,\alpha)$ (Lemma \ref{l:D}) implies it is sufficient to show
\begin{equation}\label{e:Rest}
|\mathcal R_{\tilde{\mathcal D}}| \lesssim \eps\left(|z(t)|^4 + \|\partial_y w\|_{L^2}^2 + \|v_2\|_{L_\omega^2}^2\right),
\end{equation}
and use \eqref{e:zeta}, where $\mathcal R_{\tilde{\mathcal D}}$ is given by \eqref{e:remainder}. The estimate \eqref{e:Rest} relies on the exponential decay of $\bar Y_1$ and $\bar g$ and is proven exactly as in \cite{KMM}, since the remainder $\mathcal R_{\tilde{\mathcal D}}$ is formally the same as in the constant-speed case, including the error terms $F_\alpha$, $F_\beta$, $F_1$, and $F_2$. 

We now prove \eqref{e:v1v2}. Replacing $\psi'$ with $\theta = \sech(\frac y {2\sqrt 2})$ in \eqref{e:psiprime}, we have
\begin{align}\label{e:theta}
\frac d {dt} \int \theta v_1v_2 &= \int \theta(v_2^2 - (\partial_y v_1)^2 - 2v_1^2) + \frac 1 2 \int (\theta'' +(\theta b)')v_1^2 - \int \theta b v_1\partial_y v_1\nonumber\\
&\quad + 3\int \theta(1-K^2)v_1^2 - \alpha \int \theta v_1 \bar f + \int \theta(F_1 v_2 + v_1 F_2).
\end{align}
Since $\theta'$ and $\theta''$ have the same decay as $\theta$ as $y\to\infty$, we have
\[\int \theta \left[(\partial_y v_1)^2 + (3K^2-1)v_1^2 + |bv_1\partial_yv_1|\right] + \int |(\theta b)'| v_1^2 + \frac 1 2 \int |\theta''| v_1^2 \lesssim \|v_1\|_{H_\omega^1}^2,\]
and
\[ \left|\alpha\int \theta v_1\bar f\right| \lesssim |z|^2\|v_1\|_{H_\omega^1} \lesssim |z|^4 + \|v_1\|_{H_\omega^1}^2,\]%\int \theta v_2^2 \lesssim \|v_2\|_{L_\omega^2}^2, \qquad
since $|\alpha| \lesssim |z|^2$ and $|\bar f(y)|\lesssim e^{-|y|/\sqrt 2}$. 
%The terms $ \int \psi' F_1 v_2$ and $\int \psi' F_2 v_1$ appear in $\mathcal R_{\tilde{\mathcal D}}$ (see \eqref{e:remainder}), so because $|\theta(y)| \lesssim \psi'(y)= \sech^2(y/8\sqrt 2)$, we conclude from \eqref{e:Rest} that
Recalling that $F_1 = -qF_\alpha$, \eqref{e:FalphaFbeta} implies
\begin{equation}\label{e:F1v2}
\left|\int \theta F_1 v_2\right| \lesssim  |z|\left(|z|^2 + \|v_1\|_{L_\omega^2}^2\right)\|v_2\|_{L_\omega^2} \lesssim \eps \left(|z|^4 + \|v_2\|_{L_\omega^2}^2 + \|v_1\|_{L_\omega^2}^2\right).
\end{equation} 
To deal with the term $\int \theta F_2 v_1$, recall the expression for $F_2$, written in terms of $v_1$:
\begin{align*}
F_2 &= -\left[ 3K((v_1 - |z|^2q)^2+2(v_1 - |z|^2q)z_1\bar Y_1) + (v_1 - |z|^2q+z_1\bar Y_1)^3\right.\\
&\quad \left.- \langle 3K((v_1 - |z|^2q)^2 + 2(v_1 - |z|^2q)z_1 \bar Y_1) + (v_1 - |z|^2q+z_1\bar Y_1)^3,\bar Y_1\rangle_p \bar Y_1\right].
\end{align*}
From the decay of $q$ and $\bar Y_1$, it is straightforward to obtain
\begin{equation}\label{e:F2v1}
\left|\int \theta F_2 v_1\right| \lesssim \|v_1\|_{L_\omega^2}^2 + |z|^3\|v_1\|_{L_\omega^2} + |z|^4  \lesssim |z|^4 + \|v_1\|_{L_\omega^2}^2.
\end{equation}
With these estimates, \eqref{e:theta} implies
\[\frac d {dt} \int \theta v_1v_2 \geq \frac 1 2 \|v_2\|_{L_\omega^2}^2 - C\left(|z|^4+ \|v_1\|_{H_\omega^1}^2\right).\]
%which implies \eqref{e:v1v2}.
% since $\|v_1\|_{H_\omega^1}\lesssim \|\partial_y w\|_{L^2}$.

To prove the proposition, let
\[\mathcal K := \frac \kappa {4\mu} \gamma - (\mathcal I + \mathcal J) + 2\sigma\int\sech\left(\frac y {2\sqrt 2}\right) v_1v_2,\]
with $\sigma>0$ to be chosen. Differentiating and using \eqref{e:gamma}, \eqref{e:IplusJ}, and \eqref{e:v1v2}, we have
\[\frac d {dt} \mathcal K \geq \frac {\kappa}2(\alpha^2+\beta^2) + \kappa\|v_1\|_{H_\omega^1}^2 + \sigma\|v_2\|_{L_\omega^2}^2 - C(\sigma + \eps)\left(|z(t)|^4 + \|v_1\|_{H_\omega^1}^2\right) - C\eps\|v_2\|_{L_\omega^2}^2.\]
Since $\alpha^2+\beta^2 = |z|^4$, we can choose $\sigma>0$ small enough and then $\eps>0$ small enough that
\begin{equation}\label{e:dKdt}
\frac d {dt} \mathcal K \gtrsim |z(t)|^4 + \|v_2\|_{L_\omega^2}^2 + \|v_1\|_{H_\omega^1}^2 \gtrsim |z(t)|^4 + \|v\|_{H_\omega^1\times L_\omega^2}^2.
\end{equation}
Next, straightforward integral estimates applied to the expressions for $\mathcal I, \mathcal J$, and $\gamma$ imply
\[\left|\mathcal K(t) \right|\lesssim \|v(t)\|_{H^1\times L^2}^2 + |z(t)|^4 \lesssim \eps^2,\]
uniformly in $t\in \R$, where the last inequality follows from \eqref{e:small}. We integrate \eqref{e:dKdt} on $[-t_0,t_0]$ and send $t_0\to\infty$ to obtain \eqref{e:time}.
\end{proof}

We are now in a position to prove our main result.

\begin{proof}[Proof of Theorem \textup{\ref{t:main}}] Let
\[\mathcal H := \int \left((\partial_y v_1)^2 + 2v_1^2 + v_2^2\right)\sech\left(\frac y{2\sqrt 2}\right).\]
With $\theta(y) = \sech(y/2\sqrt 2)$ as above, we differentiate $\mathcal H$:
\begin{align*}
\dot{\mathcal H} &= 2\int \theta (\partial_y \dot v_1 \partial_y v_1 + 2\dot v_1 v_1 + \dot v_2 v_2)\\
&= 2\int \theta[\partial_y v_2 \partial_y v_1 + 2v_2 v_1 -(\mathcal L_K v_1)v_2 - \alpha \bar f v_2 +\partial_y F_1 \partial_y v_1 + 2F_1v_1 + F_2 v_2]\\
&= -2\int \theta'v_2\partial_y v_1 + 2 \int \theta[3(1-K^2)v_1v_2 - \alpha \bar f v_2 + bv_2 \partial_y v_1 - dv_1v_2]\\
&\quad + 2 \int \theta(\partial_y F_1 \partial_y v_1 + 2F_1v_1 + F_2 v_2). 
\end{align*}
Note that
\[\left|\int \theta' v_2(\partial_y v_1) + \int \theta bv_2\partial_y v_1 - \int \theta d v_1 v_2\right|\lesssim \int\theta [(\partial_y v_1)^2 + v_1^2 + v_2^2] .\]
In a similar manner to \eqref{e:F1v2} and \eqref{e:F2v1}, one can show
\[\int \theta [\partial_y F_1 \partial_y v_1 + 2F_1 v_1 + F_2 v_2] \lesssim |z|^4 + \|v\|_{H_\omega^1\times L_\omega^2}^2,\]
and we conclude
\begin{equation}\label{e:dot}
|\dot{\mathcal H}| \lesssim |z(t)|^4 + \|v(t)\|_{H_\omega^1\times L_\omega^2}^2.
\end{equation}
By the orbital stability, there exists a sequence $t_n\rightarrow \infty$ with $\mathcal H(t_n) + z(t_n) \rightarrow 0$. Given $t\in \R$, integrate \eqref{e:dot} from $t$ to $t_n$ and pass to the limit as $n\rightarrow \infty$ to obtain
\[\mathcal H(t) \lesssim\int_t^\infty \left(|z(t)|^4+\|v(t)\|_{H_\omega^1\times L_\omega^2}^2\right)\dd t.\]
Combined with \eqref{e:time}, this implies $\lim_{t\rightarrow\infty} \mathcal H(t) = 0$. By a similar argument, $\lim_{t\rightarrow-\infty} \mathcal H(t) = 0$. Note that by \eqref{e:FalphaFbeta},
\[\left|\frac d {dt}|z|^4\right| = 2|\alpha F_\alpha + \beta F_\beta| \lesssim  |z|^3\left(|z|^2+\|v_1\|_{L_\omega^2}^2\right) \lesssim |z|^4 + \|v_1\|_{L_\omega^2}^2,\]
so we can integrate in time as above and conclude $z(t)\rightarrow 0$ as $t\rightarrow \pm\infty$. Since $u_1 = v_1 - q|z|^2$, we have $\lim_{t\rightarrow\pm\infty} \|u(t)\|_{H^1(I)\times L^2(I)} = 0$ for any bounded interval $I$, as desired.
\end{proof}

\end{document}